\newtheorem{theorem}{Theorem}%[section]
\newtheorem{proposition}[theorem]{Proposition}
\newtheorem{lemma}[theorem]{Lemma}
\theoremstyle{definition}
\theoremstyle{remark}
\newcommand{\de}{\delta}
\newcommand{\ep}{\epsilon}
\newcommand{\om}{\omega}
\newcommand{\si}{\sigma}
\newcommand{\te}{\theta}
\newcommand{\vp}{\varphi}
\newcommand{\De}{\Delta}
\newcommand{\Ga}{\Gamma}
\newcommand{\Om}{\Omega}
\newcommand{\BOm}{\overline\Omega}
\newcommand{\cU}{{\mathcal U}}
\newcommand{\cK}{{\mathcal K}}
\newcommand{\ty}{\widetilde{y}}
\newcommand{\tK}{\widetilde{K}}
\newcommand{\tom}{\widetilde{\om}}
\def\RR{\mathbb{R}}
\renewcommand\SS{\mathbb{S}}
\newcommand{\cB}{{\mathcal B}}
\newcommand{\cE}{{\mathcal E}}
\newcommand{\cI}{{\mathcal I}}
\newcommand{\cJ}{{\mathcal J}}
\newcommand{\cO}{{\mathcal O}}
\newcommand{\cS}{{\mathcal S}}
\def\th{^{\text{th}}}
\def\bw{{\bar w}}
\newcommand{\pd}{\partial}
\newcommand\minus\backslash
\newcommand\lan\langle
\newcommand\ran\rangle
\def\Div{\nabla\cdot }
\DeclareMathOperator\dist{dist}
\renewcommand\leq\leqslant
\renewcommand\geq\geqslant
\newlength{\intwidth}
\newcommand*{\mint}[1]{%
  % #1: overlay symbol
  \mint@l{#1}{}%
}
\newcommand*{\mint@l}[2]{%
  % #1: overlay symbol
  % #2: limits
  \@ifnextchar\limits{%
    \mint@l{#1}%
  }{%
    \@ifnextchar\nolimits{%
      \mint@l{#1}%
    }{%
      \@ifnextchar\displaylimits{%
        \mint@l{#1}%
      }{%
        \mint@s{#2}{#1}%
      }%
    }%
  }%
}
\newcommand*{\mint@s}[2]{%
  % #1: limits
  % #2: overlay symbol
  \@ifnextchar_{%
    \mint@sub{#1}{#2}%
  }{%
    \@ifnextchar^{%
      \mint@sup{#1}{#2}%
    }{%
      \mint@{#1}{#2}{}{}%
    }%
  }%
}
\def\mint@sub#1#2_#3{%
  \@ifnextchar^{%
    \mint@sub@sup{#1}{#2}{#3}%
  }{%
    \mint@{#1}{#2}{#3}{}%
  }%
}
\def\mint@sup#1#2^#3{%
  \@ifnextchar_{%
    \mint@sub@sup{#1}{#2}{#3}%
  }{%
    \mint@{#1}{#2}{}{#3}%
  }%
}
\def\mint@sub@sup#1#2#3^#4{%
  \mint@{#1}{#2}{#3}{#4}%
}
\def\mint@sup@sub#1#2#3_#4{%
  \mint@{#1}{#2}{#4}{#3}%
}
\newcommand*{\mint@}[4]{%
  % #1: \limits, \nolimits, \displaylimits
  % #2: overlay symbol: -, =, ...
  % #3: subscript
  % #4: superscript
  \mathop{}%
  \mkern-\thinmuskip
  \mathchoice{%
    \mint@@{#1}{#2}{#3}{#4}%
        \displaystyle\textstyle\scriptstyle
  }{%
    \mint@@{#1}{#2}{#3}{#4}%
        \textstyle\scriptstyle\scriptstyle
  }{%
    \mint@@{#1}{#2}{#3}{#4}%
        \scriptstyle\scriptscriptstyle\scriptscriptstyle
  }{%
    \mint@@{#1}{#2}{#3}{#4}%
        \scriptscriptstyle\scriptscriptstyle\scriptscriptstyle
  }%
  \mkern-\thinmuskip
  \int#1%
  \ifx\\#3\\\else_{#3}\fi
  \ifx\\#4\\\else^{#4}\fi  
}
\newcommand*{\mint@@}[7]{%
  % #1: limits
  % #2: overlay symbol
  % #3: subscript
  % #4: superscript
  % #5: math style
  % #6: math style for overlay symbol
  % #7: math style for subscript/superscript
  \begingroup
    \sbox0{$#5\int\m@th$}%
    \sbox2{$#5\int_{}\m@th$}%
    \dimen2=\wd0 %
    % => \dimen2 = width of \int
    \let\mint@limits=#1\relax
    \ifx\mint@limits\relax
      \sbox4{$#5\int_{\kern1sp}^{\kern1sp}\m@th$}%
      \ifdim\wd4>\wd2 %
        \let\mint@limits=\nolimits
      \else
        \let\mint@limits=\limits
      \fi
    \fi
    \ifx\mint@limits\displaylimits
      \ifx#5\displaystyle
        \let\mint@limits=\limits
      \fi
    \fi
    \ifx\mint@limits\limits
      \sbox0{$#7#3\m@th$}%
      \sbox2{$#7#4\m@th$}%
      \ifdim\wd0>\dimen2 %
        \dimen2=\wd0 %
      \fi
      \ifdim\wd2>\dimen2 %
        \dimen2=\wd2 %
      \fi
    \fi
    \rlap{%
      $#5%
        \vcenter{%
          \hbox to\dimen2{%
            \hss
            $#6{#2}\m@th$%
            \hss
          }%
        }%
      $%
    }%
  \endgroup
}
\newcommand\loc{_{\mathrm{loc}}}
\DeclareMathOperator\PV{PV}
 \def\curl{\nabla\times }
\DeclareMathOperator\BS{BS}
\def\hot{\text{h.o.t.}}
\begin{document}

\title{The Biot--Savart operator of a bounded domain}

\author{Alberto Enciso}
\address{Instituto de Ciencias Matem\'aticas, Consejo Superior de
  Investigaciones Cient\'\i ficas, 28049 Madrid, Spain}
\email{aenciso@icmat.es, mag.ferrero@icmat.es, dperalta@icmat.es}

\author{M.\ \'Angeles Garc\'\i a-Ferrero}
%\address{Instituto de Ciencias Matem\'aticas, Consejo Superior de
%  Investigaciones Cient\'\i ficas, 28049 Madrid, Spain}

\author{Daniel Peralta-Salas}
%\address{Instituto de Ciencias Matem\'aticas, Consejo Superior de
%  Investigaciones Cient\'\i ficas, 28049 Madrid, Spain}
%\email{dperalta@icmat.es}

%%    General info
%\subjclass[2010]{35B38, 58J05, 58K45}
%\date{\today}
%
%\keywords{ }
%
\begin{abstract}
We construct the analog of the Biot--Savart integral for bounded
domains. Specifically, we show that the velocity field of an incompressible fluid with
tangency boundary conditions on a bounded domain can be
written in terms of its vorticity using an integral kernel
$K_\Om(x,y)$ that has an
inverse-square singularity on the diagonal. 
\end{abstract}
\maketitle

\section{Introduction}

The Biot--Savart operator,
\[
\BS(\om)(x):=\int_{\RR^3} \frac{\om(y)\times (x-y)}{4\pi|x-y|^3}\, dy\,,
\]
plays a key role in fluid mechanics and electromagnetism as a sort of
inverse of the curl operator. More precisely, if $\om$ is a
well-behaved divergence-free vector field on~$\RR^3$, then
$u:=\BS(\om)$ is the only solution to the equation
\[
\curl u=\om\,,\qquad \Div u=0
\]
that falls off at infinity. Consequently, in fluid mechanics the Biot--Savart
operator maps the vorticity~$\om$ of a fluid into its associated velocity field~$u$.

In this paper we will be concerned with the problem of mapping the
vorticity of a fluid contained in a bounded domain~$\Om$ of $\RR^3$ into its
velocity field. To put it differently, given a vector field $\om$ we want to solve the problem
\begin{equation}\label{problem}
\curl u= \om\,, \qquad \Div u=0\,, \qquad u\cdot \nu =0
\end{equation}
in~$\Om$, where the tangency condition $u\cdot \nu=0$ means that the
fluid stays inside the domain. We will assume throughout that the boundary of this domain is smooth,
although one could relax this condition. 

It is easy to see that, for this equation to admit a
solution, the field~$\om$ must satisfy several hypotheses. Firstly,
since the divergence of a curl is zero, it is obvious that $\om$ must
be divergence-free. Secondly, it is easy to see that
if $\Ga_1,\dots,\Ga_m$ denote the connected components of~$\pd\Om$,
then one must have
\begin{equation}\label{hypothesis}
\int_{\Ga_j}\om\cdot\nu\, d\si=0\qquad \text{for all } 1\leq j\leq m\,.
\end{equation}
To see why this is true, it is enough to take the harmonic functions $\psi_j$
defined by the boundary value problems
\[
\De\psi_j=0 \quad \text{in }\Om\,,\qquad \psi_j|_{\Ga_k}=\de_{jk}
\]
and observe that
\[
\int_{\Ga_j}\om\cdot\nu \, d\si= \int_{\Om}\Div(\psi_j\om)\, dx=
\int_\Om \nabla\psi_j\cdot\curl u\, dx=\int_{\pd\Om}\, u\cdot
(\nabla\psi_j\times \nu)\, d\si=0\,.
\]
In addition to using that $\om$ is divergence-free and integrating
by parts, we have exploited that the gradient $\nabla\psi_j$ is proportional to
$\nu$ at the boundary.

The problem~\eqref{problem} has been considered by a number of
people, who have shown the existence of solutions for
domains of different regularity and derived estimates in $L^p$ or
H\"older spaces. Up to date accounts of the problem can be found e.g.\ in~\cite{Amrouche,Amrouche2,Shkoller} and
references therein. However, the question of whether the solution is
given by an integral formula generalizing the classical Biot--Savart
law remains wide open.

Our objective in this paper is to fill this gap by showing that one can construct a
solution to the problem~\eqref{problem} through a generalized
Biot--Savart operator with a reasonably
well-behaved integral kernel. We will only state our result for the
flat-space problem, but it will be apparent from the construction that
the result holds true (mutatis mutandis) for bounded domains in any
Riemannian 3-manifold. The existence of a Biot--Savart
operator on a compact Riemannian 3-manifold without boundary can be obtained from the Green's function of
the Hodge Laplacian, computed in~\cite{deRham}, although to the best
of our knowledge the only result available in the literature~\cite{Vogel} is a
weaker analog of the Biot--Savart operator on a closed manifold that
provides a solution to the equation up to a gradient field (i.e., a
vector field satisfying $\curl u=\om+\nabla\vp$, where $\vp$ is some
smooth function). It is should be stressed that this connection
between the Green's function of the Laplacian in a domain (with
certain boundary conditions) and the Biot--Savart operator is
no longer true in the presence of a boundary condition. Indeed, the
combination of the boundary condition with the fact that the field
must be divergence-free makes the structure of the generalized
Biot--Savart kernel rather involved.

Our motivation for this is threefold. Firstly, the integral kernel of
the Biot--Savart operator on a compact 3-manifold without boundary has
been recently employed~\cite{PNAS} to show that the helicity is the
only regular integral invariant of volume-preserving
transformations. Proving a similar result for the case of manifolds with boundary presents
additional difficulties, but in any case it is important to have a
thorough understanding of the associated Biot--Savart
operator. A particular case has been recently established
in~\cite{Kudryavtseva}. Applications to electrodynamics of the Biot--Savart operator for
domains in the 3-sphere (whose existence as an integral operator is
not discussed, however) can be found in~\cite{Parsley}.  Secondly, the
existence of a generalized Biot--Savart integral in domains ensures
that the celebrated connection between the helicity of a field and the
linking number, unveiled by Arnold in the full space~\cite{Arnold},
remains valid in bounded domains.
A third motivation to construct the Biot--Savart operator in domains
is that the structure of the integral kernels of the
inverses of operators has been key to develop certain approximation
theorems that we have exploited in different
contexts~\cite{Annals,Adv,Acta}.

To state the existence of a Biot--Savart integral
operator on~$\Om$, let us define the function
\begin{equation}\label{defl}
\ell(y):=\log\bigg(2+\frac1{\dist(y,\pd\Om)}\bigg)\,.
\end{equation}
Let us also recall that a vector field $h$ on~$\Om$ is said to be
harmonic and tangent to the boundary if
\[
\curl h=0\,,\quad \Div h=0\,,\quad h\cdot\nu=0\,.
\]
By Hodge theory, the dimension of
the linear space of tangent harmonic fields is the genus
of~$\pd\Om$ (if $\pd\Om$ is disconnected, this is defined as the sum
of the genus of the connected components of the boundary). The main
result of the paper can then be presented as follows:

\begin{theorem}\label{T.main}
Let $\om\in W^{k,p}(\Om)$ be a divergence-free vector field satisfying
the hypothesis~\eqref{hypothesis}, with $k\geq0$ and $1<p<\infty$. Then the boundary-value
problem~\eqref{problem} has a solution $u\in W^{k+1,p}(\Om)$
that satisfies the estimate
\begin{equation}\label{estimate}
\|u\|_{W^{k+1,p}(\Om)}\leq C\|\om\|_{W^{k,p}(\Om)}
\end{equation}
and which can be represented as an integral of the
form
\[
u(x)=\BS_\Om(\om)(x):=\int_\Om K_\Om(x,y)\, \om(y)\, dy\,,
\]
where $K_\Om(x,y)$ is a matrix-valued integral kernel that is smooth
outside the diagonal and
satisfies the pointwise bound
\[
|K_\Om(x,y)|\leq C\frac {\ell(y)}{|x-y|^2}\,.
\]
Furthermore, the solution is unique modulo the
addition of a harmonic field tangent to the boundary. 
\end{theorem}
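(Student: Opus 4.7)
The plan is to construct the solution via a three-step procedure---divergence-free extension of $\om$ to $\RR^3$, application of the classical Biot--Savart integral on the whole space, and correction of the boundary condition through a Neumann problem on $\Om$---and then analyze the resulting integral kernel.

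First, the flux hypothesis~\eqref{hypothesis} is exactly what permits a bounded linear extension $\om\mapsto\tilde\om$ from divergence-free fields in $W^{k,p}(\Om)$ to compactly supported divergence-free fields in $W^{k,p}(\RR^3)$ with $\tilde\om=\om$ on $\Om$: in a spherical shell $B\setminus\BOm$ inside a large ball $B\supset\BOm$, the vanishing of $\int_{\Ga_j}\om\cdot\nu\,d\si$ on each boundary component renders solvable a Hodge-type auxiliary problem whose solution depends linearly on $\om$ through an explicit kernel. Set $u_0:=\BS(\tilde\om)$ on $\RR^3$, so that $\curl u_0=\om$ and $\Div u_0=0$ in $\Om$, and observe that $g:=u_0\cdot\nu|_{\pd\Om}$ has vanishing total integral by the divergence theorem. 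Hence the Neumann problem
\[
\De\phi=0\text{ in }\Om,\qquad \pd_\nu\phi=g\text{ on }\pd\Om,
\]
is solvable, and $u:=u_0-\nabla\phi$ is the desired solution of~\eqref{problem}. The $W^{k+1,p}$ estimate follows by combining the mapping properties of $\BS$ on $\RR^3$, the boundedness of the extension, and standard elliptic regularity for the Neumann problem (which gains one derivative on the boundary trace $g$); uniqueness modulo tangent harmonic fields is immediate since any difference of two solutions is curl-free, divergence-free, and tangent to $\pd\Om$.

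Composing the three linear operations yields a representation $u(x)=\int_\Om K_\Om(x,y)\,\om(y)\,dy$ with $K_\Om=K_0+K\sub{ext}-K\sub{corr}$, where $K_0(x,y)$ is the classical Biot--Savart kernel (satisfying $|K_0|\leq C|x-y|^{-2}$), $K\sub{ext}$ is the contribution from the extension over $\RR^3\setminus\Om$ and carries no diagonal singularity on $\Om\times\Om$, and the Neumann correction reads
\[
K\sub{corr}(x,y)=\int_{\pd\Om}\nabla_x N(x,z)\,\nu(z)^T K_0(z,y)\,d\si(z),
\]
with $N$ the Neumann Green's function of $\Om$. The principal obstacle is establishing the pointwise bound $|K\sub{corr}(x,y)|\leq C\ell(y)|x-y|^{-2}$. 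The row vector $\nu(z)^T K_0(z,y)$ has magnitude $\sin\te/(4\pi|z-y|^2)$, with $\te$ the angle between $\nu(z)$ and $z-y$; crucially, near the closest boundary point $z_0$ to $y$ this sine factor vanishes linearly in the tangential distance, and integrating over a two-dimensional patch of $\pd\Om$ at distance $\dist(y,\pd\Om)$ from $y$ produces a logarithm of order $\ell(y)$. A scalar magnitude bound would also spuriously produce an $\ell(x)$ factor from integrating $|x-z|^{-2}$ over a patch near the boundary point closest to $x$; the key technical observation is that the vectorial structure of $\nabla_x N(x,z)$, whose direction rotates as $z$ moves around that closest point, yields cancellations---in the spirit of Poisson-kernel convergence---that suppress this spurious factor, so that only $\ell(y)/|x-y|^2$ survives in the final bound on $K\sub{corr}$ and hence on $K_\Om$.
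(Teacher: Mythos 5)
Your construction route---a bounded divergence-free extension $\om\mapsto\tilde\om$ enabled by the flux condition~\eqref{hypothesis}, the whole-space Biot--Savart integral, and a Neumann correction---is a legitimate alternative to the paper's, which instead applies the Biot--Savart integral directly on $\Om$ and removes the resulting gradient defect with two single-layer potentials whose densities solve second-kind integral equations $(\tfrac12 I\pm T_0)f=\text{data}$. Your argument for existence, the estimate~\eqref{estimate}, and uniqueness via Hodge theory is fine. But the paper's introduction explicitly states that the core of the theorem is not the estimate but the existence and pointwise bound of the kernel $K_\Om$, and that is exactly where your proposal stops being a proof and becomes a heuristic.

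Two key steps are asserted without argument, and each is of the same order of difficulty as the paper's Section~\ref{S.kernel}. First, for $K\sub{ext}$ you need the extension $\om\mapsto\tilde\om$ to be realized by an explicit kernel $E(z,y)$ with quantitative bounds strong enough to control $\int_{\RR^3\setminus\Om}K_0(x,z)E(z,y)\,dz$; the claim that this ``carries no diagonal singularity on $\Om\times\Om$'' is true for interior points but is silent on the regime $x\to y\to\pd\Om$, which is precisely where the extension kernel is large and $|x-z|^{-2}$ fails to be integrable uniformly over the exterior collar. Second, and more centrally, you correctly observe that the naive magnitude bound on $K\sub{corr}(x,y)=\int_{\pd\Om}\nabla_x N(x,z)\,\nu(z)^T K_0(z,y)\,d\si(z)$ produces a spurious $\log(1/\dist(x,\pd\Om))$, and you correctly diagnose that the cure must be a cancellation coming from the vector structure of $\nabla_x N$; but ``in the spirit of Poisson-kernel convergence'' is not an argument. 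To make it rigorous one must expand the Neumann Green's function near $\pd\Om$ into a free kernel plus a reflected term plus a smooth remainder, subtract the data at the foot point of $x$, and carry out a uniform singular-integral estimate in the regime where $x$, $y$, and the relevant boundary points are all comparably close. That analysis is exactly the content of the paper's estimates for $I_3$, $I_4$, $I_8$, etc.\ (made tractable there because the layer-potential route keeps every kernel explicit, whereas your route also requires a parametrix for $N$), and it is missing from your proposal.
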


Let us emphasize that the core of the result is not the
estimate~\eqref{estimate}, which is not new, but the existence of an integral kernel
$K_\Om(x,y)$ with the above properties. Of course, a serious difficulty that arises in the analysis of the integral
kernel $K_\Om(x,y)$ of the Biot--Savart operator on a domain is that
it is strongly non-unique. Indeed, it is easy to see that if $\phi$ is
a scalar function on $\Om$ satisfying the Dirichlet boundary condition
$\phi|_{\pd\Om}=0$, then its gradient is orthogonal to any
divergence-free field~$\om$ in the sense that
\[
\int_\Om \nabla \phi\cdot \om\, dx=0
\]
for any vector field with $\Div \om=0$. Hence if $K'(x,y)$ is a matrix-valued
function of the form
\[
K'_{ij}(x,y)=\pd_{y_j} A_i(x,y)
\]
with $A_i(x,\cdot)|_{\pd\Om}=0$, then $K_\Om(x,y)+K'(x,y)$ is also an admissible
kernel for the Biot--Savart operator of the domain. 

The paper is organized as follows. In Section~\ref{S.BS} we will
present some estimates and identities for the usual Biot--Savart
integral that are needed later. In Section~\ref{S.extension} we
construct an extension operator for vector fields we employ during the
construction of the kernel $K_\Om(x,y)$ to deal with boundary
terms. In Section~\ref{S.construction} we construct the solution~$u$
to the problem~\eqref{problem} using layer potentials. This is
convenient because it leads to quite explicit formulas, which we
carefully analyze in Section~\ref{S.kernel} to establish the existence
of the desired integral kernel. The proof works without any major
modifications on any Riemannian 3-manifold, using the layer
potentials and the Biot--Savart operator associated with the
metric. To conclude, for completeness we discuss in
Section~\ref{S.uniqueness} the uniqueness of the solutions (possibly
with nonzero but prescribed divergence and normal component on the
boundary).

\section{Estimates and identities for the Biot--Savart integral}
\label{S.BS}

This section is a brief but reasonably self-contained presentation of several results and identities for the
Biot--Savart integral that will be used later. These results
are essentially standard. 

Given a vector field $F\in C^1(\BOm)$, we will state the results in
terms of the field
\[
w(x):=\int_\Om \frac{F(y) \times(x-y)}{4\pi|x-y|^3}\, dy\,.
\]
In what follows, $\ep_{ijk}$ will denote Levi-Civita's permutation
symbol and $B_r(x)$ (resp.~$B_r$) will denote the three-dimensional
ball of radius~$r$ centered at the point~$x$ (resp.\ at the origin).

\begin{lemma}\label{L.Dw}
The derivative of the field $w$ at any
point $x\in\Om$ is
\begin{multline}\label{Dwbien}
  \pd_jw_k(x) = \ep_{klm}\,\PV \int_\Om F_l(y)\,
  \frac{|x-y|^2\de_{jm}-3(x_j-y_j)(x_m-y_m)}{4\pi|x-y|^5} \, dy \\
  - \ep_{klm} F_l(x)\, A^j_m(x)\,,
\end{multline}
where $A^j _m$ stands for the $m\th$ component of a certain continuous
vector field $A^j \in L^\infty(\Om)$ and $\PV$ denotes the principal value.
\end{lemma}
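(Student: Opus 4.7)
Write $\Phi(z):=1/(4\pi|z|)$ for the Newton potential, so that $w_k(x)=-\ep_{klm}\int_\Om F_l(y)\,\pd_{x_m}\Phi(x-y)\,dy$ and $\pd_j w_k$ is formally a convolution against the second-order kernel $\pd_j\pd_m\Phi$. The difficulty is that this kernel has a nonintegrable $|x-y|^{-3}$ singularity; however, its angular mean on each sphere centered at the singularity vanishes, since $\int_{S^2}(|\xi|^2\de_{jm}-3\xi_j\xi_m)\,d\si(\xi)=0$. This is precisely the Calder\'on--Zygmund situation in which the derivative is expected to equal a principal value plus a local contribution, and my plan is to make this rigorous through a standard regularization argument.

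I would fix a smooth nondecreasing function $\eta_\vep$ with $\eta_\vep(r)=0$ for $r\leq\vep$ and $\eta_\vep(r)=1$ for $r\geq2\vep$, and set
\[
w^\vep_k(x):=\ep_{klm}\int_\Om F_l(y)\,\eta_\vep(|x-y|)\,\frac{x_m-y_m}{4\pi|x-y|^3}\,dy,
\]
so that $w^\vep\to w$ uniformly on $\BOm$ by dominated convergence. Because the integrand is now smooth in~$x$, differentiation under the integral sign is legal and splits $\pd_j w^\vep_k$ into a \emph{kernel piece}, in which $\pd_{x_j}$ acts on $(x_m-y_m)/|x-y|^3$ and reproduces the PV integrand of~\eqref{Dwbien} cut off by $\eta_\vep$, and a \emph{cutoff piece}, in which $\pd_{x_j}$ hits $\eta_\vep(|x-y|)$ and is supported in the annulus $\vep<|x-y|<2\vep$. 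For any fixed $x\in\Om$ and $\vep<\tfrac12\dist(x,\pd\Om)$ this annulus lies entirely inside $\Om$, which is the only geometric fact needed about the domain.

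In both pieces I would split $F(y)=F(x)+(F(y)-F(x))$. For the kernel piece the $F(x)$ term integrates to zero by the sphere mean-zero identity, while the remainder is absolutely integrable since $|F(y)-F(x)|\leq C|x-y|$; so the kernel piece converges to the PV integral in~\eqref{Dwbien}. For the cutoff piece, using $\pd_{x_j}\eta_\vep(|x-y|)=\eta_\vep'(|x-y|)(x_j-y_j)/|x-y|$ together with the same splitting of $F$, the same estimate on $F(y)-F(x)$ leaves, up to an $O(\vep)$ error, the quantity $\ep_{klm}F_l(x)$ times
\[
\int_\vep^{2\vep}\eta_\vep'(r)\,dr\cdot\frac{1}{4\pi}\int_{S^2}\xi_j\xi_m\,d\si(\xi)=\tfrac13\de_{jm},
\]
so that the cutoff piece contributes $\tfrac13\ep_{klj}F_l(x)$, matching the local term of~\eqref{Dwbien} with $A^j$ the (constant) vector field of components $A^j_m=-\tfrac13\de_{jm}$, which is trivially continuous and in $L^\infty(\Om)$.

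The only delicate step is the convergence of the kernel piece to the PV integral; this is a standard singular-integral estimate relying on the sphere mean-zero identity and on the Lipschitz regularity of $F$, and the rest of the argument is an elementary explicit angular integration.
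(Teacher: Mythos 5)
Your approach is genuinely different from the paper's and is, in one respect, more economical. The paper splits $F(y)=F(x)+(F(y)-F(x))$ at the outset, trades $\pd_{x_j}$ for $-\pd_{y_j}$ on the $F(x)$ part, and applies the divergence theorem to turn that part into a boundary integral
\[
A^j_m(x)=\int_{\pd\Om}\nu_j(y)\,\frac{x_m-y_m}{4\pi|x-y|^3}\,d\si(y)\,,
\]
whose boundedness up to $\pd\Om$ then has to be established by a separate local computation near the boundary. You instead keep the full $F(y)$ in the ``kernel piece'' and extract the local term from the ``cutoff piece'' via an explicit spherical integral, arriving at the constant matrix $-\tfrac13\de_{jm}$, which is trivially continuous and bounded. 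If executed correctly, that is a valid and arguably cleaner route to the statement, since the lemma only demands \emph{some} continuous $A^j\in L^\infty(\Om)$, and a constant certainly qualifies.

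There is, however, a genuine error in your handling of the kernel piece: the claim that \emph{``the $F(x)$ term integrates to zero by the sphere mean-zero identity''} is false on a bounded domain. The mean-zero property of the kernel $K_{jm}(x,y)=\frac{|x-y|^2\de_{jm}-3(x_j-y_j)(x_m-y_m)}{4\pi|x-y|^5}$ over spheres centered at $x$ only annihilates the integral over balls and annuli \emph{centered at $x$}; since $\Om\neq\RR^3$, the complementary contribution $\int_{\Om\setminus B_R(x)}K_{jm}(x,y)\,dy$ survives, and by the divergence theorem it equals precisely $-A^j_m(x)-\tfrac13\de_{jm}$ with $A^j$ the paper's boundary integral. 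So the boundary correction that the paper makes explicit does not disappear in your decomposition — it is hidden inside the $F(x)$-contribution of the principal value, i.e.\ in $F_l(x)\,\PV\int_\Om K_{jm}(x,y)\,dy$, which is nonzero and $x$-dependent. Your conclusion that the kernel piece converges to $\PV\int_\Om F_l(y)K_{jm}(x,y)\,dy$ is nevertheless correct — this is the standard Calder\'on--Zygmund limit argument — but the reason is not that the $F(x)$-part vanishes; it is that $\lim_{\vep\to0}\int_\Om\eta_\vep(|x-y|)K_{jm}(x,y)\,dy=\PV\int_\Om K_{jm}(x,y)\,dy$. You should replace the vanishing claim with this statement, and it would be worth recording the identity $\PV\int_\Om K_{jm}(x,y)\,dy=-A^j_m(x)-\tfrac13\de_{jm}$, which reconciles your constant $A^j_m=-\tfrac13\de_{jm}$ with the paper's boundary-integral $A^j_m(x)$ and makes transparent that the two decompositions of $\pd_j w_k$ represent the same function.

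One further small point: as you correctly note, the computation of the cutoff piece requires the annulus $\{\vep<|x-y|<2\vep\}$ to lie inside $\Om$, which holds for fixed $x\in\Om$ and $\vep$ small. This is fine for deriving the pointwise formula at an interior point. But be aware that the paper's version, with $A^j$ given by the explicit surface integral and shown to be uniformly bounded on $\Om$, is tailored to the $L^p$ estimate of Proposition~\ref{P.Dw}; if you intend to use your formula there, you should check that the constant $-\tfrac13\de_{jm}$ (together with the PV operator) gives the same mapping properties, which it does, since the two formulas differ only in how the bounded multiplier $\PV\int_\Om K_{jm}(x,\cdot)$ is bookkept between the singular-integral term and the local term.
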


\begin{proof}
Let us take a smooth function $\eta(r)$ which vanishes for $r<\frac12$ and
is equal to~1 for $r>1$ and set $\eta_\de(r):=\eta(r/\de)$, with $\de$
a small positive constant. Then let
us define the vector field
\[
w^\de(x):= \int_\Om \eta_\de(|x-y|)\,\frac{F(y) \times(x-y)}{4\pi|x-y|^3}\, dy\,.
\]
It is apparent that $w^\de$ is a smooth vector field
on~$\RR^3$. Moreover, it is not hard to see that $w^\de$ converges
to~$w$ uniformly, since for any~$x\in\Om$ one has
\begin{align*}
|w(x)-w^\de(x)|&=\bigg|\int_{B_\de(x)}(1- \eta_\de(|x-y|))\,\frac{F(y)
  \times(x-y)}{4\pi|x-y|^3}\, dy\bigg|\\
&\leq
(1+\|\eta\|_{L^\infty})\|F\|_{L^\infty(\Om)}\int_{B_\de}\frac{dz}{4\pi|z|^2}\\
&\leq C\de\,.
\end{align*}
The derivative of the $k\th$ component of $w^\de$ can be readily
computed as
\begin{align}
\pd_j w^\de_k(x)& =\ep_{klm}\int_\Om F_l(y)\, \pd_{x_j}\bigg(\eta_\de(|x-y|)\,
\frac{x_m-y_m}{4\pi|x-y|^3}\bigg) \, dy\notag\\
&= \ep_{klm}\int_\Om [F_l(y)-F_l(x)]\, \pd_{x_j}\bigg(\eta_\de(|x-y|)\,
\frac{x_m-y_m}{4\pi|x-y|^3}\bigg) \, dy\notag \\
&\qquad \qquad\qquad   - \ep_{klm} F_l(x)\int_\Om \pd_{y_j}\bigg(\eta_\de(|x-y|)\,
\frac{x_m-y_m}{4\pi|x-y|^3}\bigg) \, dy\notag\\
&= \ep_{klm}\int_\Om [F_l(y)-F_l(x)]\, \pd_{x_j}\bigg(\eta_\de(|x-y|)\,
\frac{x_m-y_m}{4\pi|x-y|^3}\bigg) \, dy \label{Dw}\\%\label{Dw2}
&\qquad \qquad \qquad \qquad \qquad \qquad \qquad \qquad\qquad - \ep_{klm} F_l(x)\, A_m^{j,\de}(x)\,,\notag
\end{align}
where for each $x\in\Om$ we have set
\[
A_m^{j,\de}(x):= \int_{\pd\Om} \nu_j(y)\,\eta_\de(|x-y|)\,
\frac{x_m-y_m}{4\pi|x-y|^3} \, d\si(y)\,.
\]

As $\de\to0$, the first term converges to the
principal value integral of the statement, since the difference
\begin{multline*}
M:=\int_\Om [F_l(y)-F_l(x)]\, \pd_{x_j}\bigg(\eta_\de(|x-y|)\,
\frac{x_m-y_m}{4\pi|x-y|^3}\bigg) \, dy\\
- \PV \int_\Om F_l(y)\, \frac{|x-y|^2\de_{jm}-3(x_j-y_j)(x_m-y_m)}{4\pi|x-y|^5} \, dy
\end{multline*}
can be estimated using the fact that $\eta_\de(r)=1$ for $r>\de$ and the mean value theorem as
\begin{align}
|M|&\leq \int_{B_\de(x)}|F_l(y)-F_l(x)|\,\bigg|\pd_{x_j}\bigg((1-\eta_\de(|x-y|))\,
\frac{x_m-y_m}{4\pi|x-y|^3}\bigg)\bigg| \, dy\notag\\
&\leq C \int_{B_\de(x)}\frac{|F_l(y)-F_l(x)|}{|x-y|^3} \, dy\notag\\
&\leq C\|\nabla F_l\|_{L^\infty(\Om)}\int_{B_\de(x)}\frac1{|x-y|^2}\,
dy\notag\\
&\leq C\de\,.\label{M}
\end{align}

Since $A^{j,\de}(x)$ obviously converges to the field
\[
A^j(x):=\int_{\pd\Om} \nu_j(y)\, \frac{x-y}{4\pi|x-y|^3} \, d\si(y)
\]
for all $x\in\Om$, Equations~\eqref{Dw} and~\eqref{M} show that the
derivative $\pd_j w_k$ is indeed given by the formula~\eqref{Dwbien} of the
statement. As $A^j$ is obviously smooth in~$\Om$, it only remains to
show that $A^j(x)$ is bounded when $x\to\pd\Om$. 

For this, let us take an arbitrary point of the boundary, which we can
assume to be the origin. Rotating the coordinate axes if necessary,
one can parametrize $\pd\Om$ in a neighborhood $U_\rho$ of the origin
as the graph
\begin{equation}\label{graph}
Y\in D_\rho \mapsto (Y,h(Y))\in \RR^3\,,
\end{equation}
where $D_\rho:=\{Y\in\RR^2:|Y|<\rho\}$ is the two-dimensional disk of a small radius~$\rho$
and the function $h$ satisfies
\[
h(0)=0\,,\qquad \nabla h(0)=0\,.
\]
Let us now analyze the behavior of $A^j(x)$ when $x=(0,0,-t)$ and $t\to
0^+$. Since the unit normal and the surface measure can be written in
terms of~$h$ as
\[
\nu=\frac{(-\nabla h,1)}{\sqrt{1+|\nabla h|^2}}\,,\qquad d\si=
\sqrt{1+|\nabla h|^2}\, dY
\]
and $x\in U_\rho$ for small enough~$t$, it follows that
\begin{align}
|A_j(x)|&\leq\bigg| \int_{\pd\Om\cap U_\rho} \nu_j(y)\, \frac{x-y}{4\pi|x-y|^3}
\, d\si(y)\bigg| +\bigg|\int_{\pd\Om\minus U_\rho} \nu_j(y)\,
\frac{x-y}{4\pi|x-y|^3} \, d\si(y)\bigg|\notag\\
&\leq\frac1{4\pi}\bigg|\int_{D_\rho}\bar\nu_j(Y)\, \frac{(Y,t+ h(Y))}{(|Y|^2+(t+ h(Y))^2)^{3/2}} \,
dY \bigg|+ C\,,\label{Aj}
\end{align}
where we have used that the second integral is bounded by a constant
independent of~$t$ and
\[
\bar\nu_j:= \sqrt{1+|\nabla h|^2}\, \nu_j
\]

It is clear then that
\begin{align*}
\bigg|\int_{D_\rho}\bar\nu_j(Y)\,& \frac{Y}{(|Y|^2+(t+ h(Y))^2)^{3/2}} \,
dY \bigg|=
\bigg|\int_{D_\rho}\frac{\bar\nu_j(0) \, Y+O(|Y|^2)+t\, O(Y)}{(|Y|^2+t^2)^{3/2}}\,
dY \bigg|\\
&\qquad \qquad \qquad \qquad \qquad\leq\bigg| \bar\nu_j(0)\int_{D_\rho} \frac{ Y}{(|Y|^2+t^2)^{3/2}}\,
dY\bigg| + C\int_{D_\rho}\frac{dY}{|Y|}\\
&\qquad \qquad \qquad \qquad \qquad\leq C\rho\,,
\end{align*}
where we have used that the first integral in the second line vanishes
by parity. Likewise,
\begin{align*}
\bigg|\int_{D_\rho}\bar\nu_j(Y)\,& \frac{t+ h(Y)}{(|Y|^2+(t+ h(Y))^2)^{3/2}} \,
dY \bigg|=
\bigg|\int_{D_\rho}\frac{\bar\nu_j(0) \, t+O(|Y|^2)+t\, O(Y)}{(|Y|^2+t^2)^{3/2}}\,
dY \bigg|\\
&\qquad \qquad \qquad \qquad \qquad\leq  2\pi
  t|\bar\nu_j(0)|\int_0^\rho \frac{ r\, dr}{(r^2+t^2)^{3/2}} + C\int_{D_\rho}\frac{dY}{|Y|}\\
&\qquad \qquad \qquad \qquad \qquad\leq 2\pi 
|  \bar\nu_j(0)|\int_0^\infty \frac{ r\, dr}{(r^2+1)^{3/2}} + C\rho\\
&\qquad \qquad \qquad \qquad \qquad\leq C\,,
\end{align*}
Hence we infer from~\eqref{Aj} that $\|A^j\|_{L^\infty(\Om)}\leq C$
and the lemma follows. 
\end{proof}

Lemma~\ref{L.Dw} immediately yields the following characterization of
the divergence and curl of~$w$:

\begin{proposition}\label{P.curl}
The vector field $w$ is divergence-free in $\RR^3$ and its curl at a point $x\in\Om$ is given by
\begin{align}\label{curlw}
\curl w(x)= F(x)+ \nabla \int_\Om \frac{\Div F(y)}{4\pi|x-y|}\, dy -
\nabla\int_{\pd\Om}
 \frac{F(y)\cdot \nu(y)}{4\pi|x-y|}\, d\si(y)\,.
\end{align}
\end{proposition}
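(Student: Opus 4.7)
The plan is to recognize $w$ as the curl of a Newtonian vector potential and deduce both identities by standard vector calculus. I would introduce
\[
\Psi(x) := \int_\Om \frac{F(y)}{4\pi|x-y|}\, dy
\]
and verify directly, using $\nabla_x(1/|x-y|) = -(x-y)/|x-y|^3$, that $w = \curl \Psi$. Standard Newton potential estimates (applied componentwise to the H\"older source $F \in C^1(\BOm)$) give $\Psi \in C^1_{\mathrm{loc}}(\RR^3)$, smooth on $\Omc$, and of class $C^{2,\al}$ on $\Om$. Divergence-freeness of $w$ is then immediate: $\Div w = \Div \curl \Psi = 0$ pointwise on $\RR^3 \setminus \pd\Om$, and since the kernel defining $w$ is $O(|x-y|^{-2})$ in three dimensions (hence making $w$ continuous across $\pd\Om$), this upgrades to the distributional statement $\Div w = 0$ on all of $\RR^3$.

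For the curl identity I would work pointwise in $\Om$ using $\curl \curl \Psi = \nabla \Div \Psi - \De \Psi$. The Laplacian contributes $\De \Psi(x) = -F(x)$ in $\Om$ by the classical Poisson relation for the Newton potential. For the divergence, using $\nabla_x \Phi(x-y) = -\nabla_y \Phi(x-y)$ with $\Phi(z) := 1/(4\pi|z|)$, I would rewrite
\[
\Div \Psi(x) = -\int_\Om F(y)\cdot \nabla_y \Phi(x-y)\, dy
\]
and integrate by parts in $y$ on $\Om \setminus B_\de(x)$. The boundary term on $\pd B_\de(x)$ is $O(\de)$ --- the integrand is $O(1/\de)$ over a sphere of area $4\pi\de^2$ --- and drops out as $\de \to 0$, leaving
\[
\Div \Psi(x) = \int_\Om \frac{\Div F(y)}{4\pi|x-y|}\, dy - \int_{\pd\Om} \frac{F(y)\cdot \nu(y)}{4\pi|x-y|}\, d\si(y).
\]
Taking $\nabla_x$ of this expression and adding $F = -\De \Psi$ yields the formula in the statement.

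The only delicate step is the integration by parts for $\Div \Psi$, i.e., checking that the sphere integral on $\pd B_\de(x)$ vanishes in the limit, which is routine for $F \in C^1(\BOm)$. A complementary derivation consistent with the preceding remark that Lemma~\ref{L.Dw} immediately yields the proposition would be to contract $\ep_{ijk}\pd_j w_k$ using the identity $\ep_{ijk}\ep_{klm} = \de_{il}\de_{jm} - \de_{im}\de_{jl}$, invoke the traceless relation $K_{jj}(z) = 0$ together with the boundary identity $\sum_j A^j_j(x) = -1$ for $x \in \Om$ (itself a divergence-theorem calculation for $\nabla_y \Phi$ with the singularity excised), and then recast the resulting principal-value integral via one further integration by parts as gradients of the Newton and single-layer potentials; both routes produce the same final expression.
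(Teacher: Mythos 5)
Your main argument is correct, and it takes a genuinely different route from the paper. The paper never introduces the Newtonian vector potential $\Psi$: it works directly with the regularized field $w^\de$ from Lemma~\ref{L.Dw}, computes $\Div w^\de$ from the explicit kernel (it vanishes because the symmetric factor $\pd_{x_k}\pd_{x_m} E$ is contracted against $\ep_{klm}$), computes $\ep_{ijk}\pd_j w^\de_k$ via $\ep_{ijk}\ep_{klm}=\de_{il}\de_{jm}-\de_{im}\de_{jl}$, swaps $\pd_{x_j}$ for $-\pd_{y_j}$ to integrate by parts in $y$ inside $\Om$, and passes to the limit $\de\to0$ using that $E=1/(4\pi|x-y|)$ is a fundamental solution. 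You instead observe that $w=\curl\Psi$ and obtain the identity from $\curl\curl\Psi=\nabla\Div\Psi-\De\Psi$, the Poisson relation $\De\Psi=-F$ in~$\Om$, and a single integration by parts for $\Div\Psi$ with an $O(\de)$ error on the excised sphere. Your route is cleaner and more conceptual, but it imports regularity facts for the Newton potential ($\Psi\in C^{2,\al}(\Om)$, continuity of $w$ across $\pd\Om$) that the paper's route gets for free because it only ever differentiates the smooth approximants $w^\de$. The paper's computation is also tailored to reuse formula~\eqref{Dw} from the proof of Lemma~\ref{L.Dw}, which is the only reason it is stated as an immediate consequence of that lemma.

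Two small remarks. First, the upgrade ``$\Div w=0$ on $\RR^3\setminus\pd\Om$ plus continuity of $w$ across $\pd\Om$ implies $\Div w=0$ distributionally on $\RR^3$'' is correct but slightly compressed: to integrate by parts in each of $\Om$ and $\Omc$ you should invoke that $w\in W^{1,p}_{\mathrm{loc}}$ on either side (which does hold here, cf.\ the Calder\'on--Zygmund bound in Proposition~\ref{P.Dw}), and then the two boundary terms cancel precisely because the two one-sided traces of $w$ agree. Second, your sketched ``complementary derivation'' is in fact the route the paper takes, and the ingredients you cite are correct: the principal-value kernel has zero trace, and the divergence-theorem computation for $\nabla_y(1/(4\pi|x-y|))$ over $\Om\setminus B_\de(x)$ indeed gives $\sum_j A^j_j(x)=-1$ for $x\in\Om$, which is where the $F(x)$ term comes from in the paper's contraction of $\ep_{ijk}\pd_j w_k$.
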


\begin{proof}
It follows from the proof of Lemma~\ref{L.Dw} that $\Div
w=\lim_{\de\to 0} \Div w^\de$, with $w^\de$ defined as before. For
simplicity, let us write 
\[
E:=\frac1{4\pi|x-y|}\,.
\]
Since
Equation~\eqref{Dw} shows that
\begin{align*}
\Div w^\de&= \ep_{klm}\int_\Om F_l(y)\, \pd_{x_k}\bigg(\eta_\de(|x-y|)\,
\frac{x_m-y_m}{4\pi|x-y|^3}\bigg) \, dy\\
&= \ep_{klm}\int_\Om F_l(y)\, \bigg(\eta_\de'(|x-y|)\frac{(x_k-y_k)(x_m-y_m)}{4\pi|x-y|^4}-\eta_\de(|x-y|)\,
\pd_{x_k}\pd_{x_m}E\bigg) \, dy\\
&=0\,,
\end{align*}
it follows that $\Div w=0$ everywhere.

To compute $\curl w$, let us begin by computing the $i\th$ component
of $\curl w^\de$ using again Equation~\eqref{Dw} and the fact that
$\ep_{ijk}\ep_{klm}= \de_{il}\de_{jm}-\de_{im}\de_{jl}$:
\begin{align*}
\ep_{ijk}\pd_j w^\de_k(x)& =\int_\Om F_i(y)\, \pd_{x_j}\bigg(\eta_\de(|x-y|)\,
\frac{x_j-y_j}{4\pi|x-y|^3}\bigg) \, dy \\
&\qquad \qquad \qquad \qquad \qquad+ \int_\Om F_j(y)\, \pd_{x_j}\big(\eta_\de(|x-y|)\,
\pd_{x_i}E\big) \, dy\\
&=\pd_{x_j}\int_\Om F_i(y)\, \eta_\de(|x-y|)\,
\frac{x_j-y_j}{4\pi|x-y|^3} \, dy \\
&\qquad \qquad \qquad \qquad \qquad- \int_\Om F_j(y)\, \pd_{y_j}\big(\eta_\de(|x-y|)\,
\pd_{x_i}E\big) \, dy\\
&=\pd_{x_j}\int_\Om F_i(y)\, \eta_\de(|x-y|)\,
\frac{x_j-y_j}{4\pi|x-y|^3} \, dy \\
&- \int_{\pd\Om} F\cdot\nu (y)\,\eta_\de(|x-y|)\,
\pd_{x_i}E \, d\si(y) + \int_{\Om} \Div F(y)\, \eta_\de(|x-y|)\,
\pd_{x_i}E \, dy\,.
\end{align*}
Taking the limit $\de\to0$ and using that $E$ is a fundamental
solution of the Laplacian one then obtains
\begin{align*}
(\curl w)_i(x)&= \lim_{\de\to0} \ep_{ijk}\pd_j w_k^\de(x)\\
&=\pd_{x_j}\int_\Om F_i(y)\, 
\frac{x_j-y_j}{4\pi|x-y|^3} \, dy - \int_{\pd\Om} F\cdot\nu (y)\,
\pd_{x_i}E \, d\si(y) \\
&\qquad\qquad \qquad\qquad \qquad\qquad \qquad \qquad \qquad 
+ \int_{\Om} \Div F(y)\, 
\pd_{x_i}E \, dy\\
&=F_i(x)-\pd_{x_i}\int_{\pd\Om}F\cdot \nu\, E\, d\si(y)+\pd_{x_i}\int_\Om
  \Div F\, E\, dy\,.
\end{align*}
The proposition then follows.
\end{proof}

We are now ready to state the basic $L^p$ estimate for~$w$:

\begin{proposition}\label{P.Dw}
For any nonnegative integer~$n$ and $1<p<\infty$, the field~$w$ can be estimated as
\[
\|w\|_{W^{n+1,p}(\Om)}\leq C\|F\|_{W^{n,p}(\Om)}\,.
\]
\end{proposition}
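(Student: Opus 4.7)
The plan is to prove the estimate by induction on $n$, with the base case $n=0$ handled by Calder\'on--Zygmund theory for the Newtonian potential and the inductive step reduced to exponent $n-1$ via a single integration by parts.

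For the base case, extend $F$ by zero outside $\Om$ to obtain $F_0\in L^p(\RR^3)$ and note the global identity $w=\curl(NF_0)$, where $(NG)(x):=\int_{\RR^3}G(y)/(4\pi|x-y|)\,dy$ denotes the componentwise Newtonian potential. Since $-\De(NF_0)=F_0\in L^p(\RR^3)$, the classical Calder\'on--Zygmund estimate for the Hessian of the Newton potential yields $\|\nabla^2(NF_0)\|_{L^p(\RR^3)}\le C\|F\|_{L^p(\Om)}$, hence $\|\nabla w\|_{L^p(\Om)}\le C\|F\|_{L^p(\Om)}$. The bound on $\|w\|_{L^p(\Om)}$ itself follows from the pointwise estimate $|w(x)|\le C\int_\Om |F(y)|/|x-y|^2\,dy$ combined with Schur's test on~$\Om$, as the kernel $|x-y|^{-2}$ is uniformly integrable over the bounded set~$\Om$.

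For the inductive step, fix $n\ge 1$ and assume the estimate with $n$ replaced by $n-1$, for all $1<p<\infty$. Taking $F\in C^\infty(\BOm)$ by density and exploiting $\pd_{x_i}=-\pd_{y_i}$ on functions of $x-y$, one can use the $\eta_\de$-regularization from the proof of Lemma~\ref{L.Dw} to justify a single integration by parts on~$\Om$, obtaining
\[
\pd_{x_i}w(x)=\int_\Om\frac{\pd_iF(y)\times(x-y)}{4\pi|x-y|^3}\,dy\,-\,\curl_x\!\int_{\pd\Om}\frac{F(y)\,\nu_i(y)}{4\pi|x-y|}\,d\si(y).
\]
The first summand is the Biot--Savart-type integral applied to $\pd_iF\in W^{n-1,p}(\Om)$, so its $W^{n,p}(\Om)$-norm is controlled by $C\|F\|_{W^{n,p}(\Om)}$ via the inductive hypothesis. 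The second summand is the curl of a single-layer potential with density $F\nu_i\in W^{n-1/p,p}(\pd\Om)$; the classical regularity of the single-layer on smooth boundaries (gain of $1+1/p$ derivatives in the $L^p$ scale) together with the trace theorem bounds its $W^{n,p}(\Om)$-norm by $C\|F\|_{W^{n,p}(\Om)}$. Combined with the base-case control of $\|w\|_{L^p(\Om)}$, this closes the induction.

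The main technical input is the sharp single-layer estimate $\|S\phi\|_{W^{s+1+1/p,p}(\Om)}\le C\|\phi\|_{W^{s,p}(\pd\Om)}$ on smooth $\pd\Om$, applied at the fractional index $s=n-1/p$; this is classical (obtainable via pseudodifferential calculus on the boundary, or via the layer-potential machinery of Fabes--Jodeit--Rivi\`ere and Verchota and its higher-regularity refinements), but it must be invoked carefully and is the main outside ingredient of the argument. A secondary delicate point is justifying the integration-by-parts identity despite the kernel's $|x-y|^{-2}$ singularity, which is handled by the $\eta_\de$-mollification already used in Lemma~\ref{L.Dw}; the key simplification here compared to the PV computation of that lemma is that only one derivative is transferred from $x$ to~$y$, so the integrand remains locally integrable throughout the limit and no PV correction arises.
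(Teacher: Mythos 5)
Your argument is correct, but it takes a genuinely different route from the one in the paper. The paper handles $n\geq 1$ by showing that $\De w=-\curl F$ and that $w$ is divergence-free, reducing the estimate via standard elliptic regularity to a bound $\|w\|_{W^{n+1-1/p,p}(\pd\Om)}\leq C\|F\|_{W^{n,p}(\Om)}$ on the boundary trace; this in turn is proved by a direct (and somewhat involved) analysis of iterated tangential derivatives $Z^1\cdots Z^n w$ as singular integral operators on $\pd\Om$, exploiting cancellations coming from $Z(x)\cdot\nu(y)$ and $\nu(y)\cdot(x-y)$ being of the right order. You instead do one integration by parts in $y$ (regularized via the same $\eta_\de$ as in Lemma~\ref{L.Dw}, which is indeed enough because the resulting boundary and volume integrands are $O(|x-y|^{-2})$ and hence locally integrable, so no principal value correction appears), which writes $\pd_{x_i}w$ as the Biot--Savart integral of $\pd_iF$ plus the curl of the single-layer potential of $F\nu_i$; then you close an induction on $n$ using the sharp mapping property $S\colon W^{s,p}(\pd\Om)\to W^{s+1+1/p,p}(\Om)$ of the single-layer and the trace theorem. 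What your approach buys is a cleaner and shorter inductive structure that avoids the boundary singular-integral analysis entirely; what it costs is that the single-layer mapping estimate at fractional index $s=n-1/p$ becomes the key outside input, whereas the paper's argument is more self-contained in that it derives the needed boundary bounds directly (which is also somewhat closer in spirit to the boundary-layer computations that the rest of the paper carries out). Both are built on the same $n=0$ base — a Calder\'on--Zygmund bound — though you get there through the global identity $w=\curl(N(F\,1_\Om))$ rather than the principal-value formula of Lemma~\ref{L.Dw}.
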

\begin{proof}
Notice that the principal value integral appearing in the first term of the formula~\eqref{Dwbien} is the action
of a Calder\'on--Zygmund operator in $\RR^3$ on $F_l\, 1_\Om$, where $1_\Om$
denotes the indicator function of the domain. As the field $A^j$ is
bounded, it is then standard that
\begin{equation}\label{estW1}
\|w\|_{W^{1,p}(\Om)}\leq C\|F\|_{L^p(\Om)}\,.
\end{equation}
This is the bound of the statement with $n=0$.

We claim that, if $Z^1,\dots, Z^n$ are smooth vector fields in $\BOm$
which are tangent to $\pd\Om$, then
\begin{equation}\label{tang.bound}
\|Z^1\cdots Z^n w\|_{W^{1-\frac1p,p}(\pd\Om)} \leq  C\|F\|_{W^{n,p}(\Om)}\,,
\end{equation}
where, as it is customary, we are regarding the vector field $Z^j$ as a first-order
differential operator (namely, $Z^j=Z^j_i(x)\, \pd_{x_i}$) and the
function~$F$ can be taken smooth.
It is easy to see that the proposition readily follows from this
estimate. Indeed, 
Proposition~\ref{P.curl} ensures that $w$ is
divergence-free, so we can take the curl of~\eqref{curlw} to find that
\[
\De w=-\curl F
\]
in~$\Om$. Since~\eqref{tang.bound} means that
$\|w\|_{W^{n+1-\frac1p,p}(\pd\Om)}\leq C\|F\|_{W^{n,p}(\Om)}$, standard elliptic estimates then yield
\begin{align*}
\|w\|_{W^{n+1,p}(\Om)}&\leq C\big(\|\curl F\|_{W^{n-1,p}(\Om)} +
\|w\|_{W^{1,p}(\Om)}+ \|w\|_{W^{n+1-\frac1p,p}(\pd\Om)}\big)\\
&\leq C\|F\|_{W^{n,p}(\Om)}\,,
\end{align*}
as claimed. 

Hence it only remains to prove~\eqref{tang.bound}. In view of the
trace inequality
\[
\|f\|_{W^{s,q}(\pd\Om)}\leq C\|f\|_{W^{s+\frac1q,q}(\Om)}\,,
\]
it suffices to show that for any~$j$ and~$n$ one can write
\[
\pd_j(Z^1\cdots Z^n w_k)=\cI_n+\cJ_n\,,
\]
where the terms $\cI_n$ and $\cJ_n$ (which depend on $j$, $k$ and~$n$) are respectively bounded as
\[
\|\cI_n\|_{W^{s,q}(\pd\Om)}\leq C\|F\|_{W^{s+n-1,q}(\pd\Om)}\,,\qquad
\|\cJ_n\|_{L^p(\Om)}\leq C\|F\|_{W^{n,p}(\Om)}
\]
for all real $s$ and all $1<q<\infty$. In fact, it is slightly more convenient to prove an analogous estimate for the quantity
\[
Z^1\cdots Z^n \pd_j w_k =\cI_1+\cI_2\,;
\]
this clearly suffices for our purposes as the commutator term
\[
\pd_j(Z^1\cdots Z^n w_k)-Z^1\cdots Z^n \pd_j w_k
\]
only involves $n$th order derivatives of~$w$, of which at least $n-1$
are taken along tangent directions on the boundary.  The ideas of the
proof are mostly standard, but we will provide a sketch of the proof
as we have not found a suitable reference in the literature.

Let us start with the case
$n=1$. We can differentiate the
formula~\eqref{Dw} to obtain, for any tangent vector field~$Z$,
\begin{align*}
Z\pd_j w^\de_k(x)& =\ep_{klm}Z_i(x)\int_\Om F_l(y)\, \pd_{x_i}\pd_{x_j}\bigg(\eta_\de(|x-y|)\,
\frac{x_m-y_m}{4\pi|x-y|^3}\bigg) \, dy\\
&=-\ep_{klm}Z_i(x)\int_\Om F_l(y)\, \pd_{y_i}\pd_{x_j}\bigg(\eta_\de(|x-y|)\,
\frac{x_m-y_m}{4\pi|x-y|^3}\bigg) \, dy\\
&=:\cI_1+\cJ_1\,,
\end{align*}
where
\begin{align*}
\cI_1&:= -\ep_{klm}\int_{\pd\Om} Z(x)\cdot \nu(y)F_l(y)\, \pd_{x_j}\bigg(\eta_\de(|x-y|)\,
\frac{x_m-y_m}{4\pi|x-y|^3}\bigg) \, d\si(y)\,,\\
\cJ_1&:= \ep_{klm}Z_i(x)\int_\Om \pd_iF_l(y)\, \pd_{x_j}\bigg(\eta_\de(|x-y|)\,
\frac{x_m-y_m}{4\pi|x-y|^3}\bigg) \, dy\,.
\end{align*}
Writing the volume integral as
\begin{multline*}
\cJ_1=\ep_{klm}Z_i(x)\int_\Om [\pd_iF_l(y)-\pd_i F_l(x)]\, \pd_{x_j}\bigg(\eta_\de(|x-y|)\,
\frac{x_m-y_m}{4\pi|x-y|^3}\bigg) \, dy\\
- ZF_l(x)\, \ep_{klm}A^{j,\de}_m(x)
\end{multline*}
to obtain a principal-value-type formula, it is clear in view of
the boundedness of~$A^j_m$ that the $L^p$ norm of~$\cJ_1$ is controlled
in terms of~$F$ with a $\de$-independent constant:
\[
\|\cJ_1\|_{L^p(\Om)}\leq C\|F\|_{W^{1,p}(\Om)}\,.
\]

To analyze the boundary term, $\cI_1$, we will next restrict our
attention to points~$x$ lying on~$\pd\Om$. We can perform the analysis
locally, parametrizing a portion of the boundary as a graph using the
notation~\eqref{graph}, which amounts to writing $x=(X,h(X))$, possibly after
a rotation of the coordinate axes. A basis for the space of tangent vectors at
the point~$x$ is
\[
T_1(X):=(1,0,\pd_1 h(X))\,,\qquad T_2(X):=(0,1,\pd_2h(X))\,.
\]
Abusing the notation to denote by $f(X)$ the value of a function $f(x)$ at the
point $x=(X,h(X))\in\pd\Om$, one can therefore write
\[
Z(X)=a_1(X)\, T_1(X)+a_2(X)\, T_2(X)
\]
in terms of two smooth functions $a_j(X)$. This implies that
\[
Z(X)\cdot \nu(Y)=\frac{a_1(X)\,[\pd_1 h(X)-\pd_1h(Y)]+ a_2(X)\,[\pd_2
  h(X)-\pd_2h(Y)]}{\sqrt{1+|\nabla h(Y)|^2}}\,.
\] 
Using the cancellation that stems from this formula it is not hard to see that, as the singular part of
$\cI_1$ is
\begin{multline*}
\int_{D_\rho} g(Y)\, F_l(Y)\, \bigg[Z(X)\cdot
\nu(Y)\,\eta_\de(|X-Y|)\,\frac{|X-Y|^2\de_{jm}-3(X_j-Y_j)(X_m-Y_m)}{|X-Y|^5}\\
+
\cO\Big(\frac1{|X-Y|}\Big)\bigg]\, dY
\end{multline*}
with $g(Y)$ a smooth function, $\cI_1$ defines a singular integral
operator on the boundary, so we have
\[
\|\cI_1\|_{L^q(\pd\Om)}\leq C\|F\|_{L^q(\pd\Om)}
\]
for all $1<q<\infty$. Furthermore, the coefficients are smooth and the
derivatives have the right singularities, so a straightforward
computation shows that $\cI_1$ behaves like a zeroth order
pseudodifferential operator on the boundary, leading to the estimate
\[
\|\cI_1\|_{W^{s,q}(\pd\Om)}\leq C\|F\|_{W^{s,q}(\pd\Om)}\,.
\]
The case $n=1$ then follows.

In the case of~$n\geq2$, the proof goes along the same lines but there is
another kind of term that one needs to consider. To see this, we
next consider the case $n=2$, which illustrates all the difficulties
that appear in the general case. For this we take another vector field $Z'$ that is
tangent to the boundary and differentiating the formula for $Z\pd_j
w_k^\de$ to get
\begin{multline*}
Z'Z\pd_j w_k^\de(x)= \ep_{klm}Z'_n(x) \pd_{x_n}\int_\Om Z_i(x) \,\pd_iF_l(y)\, \pd_{x_j}\bigg(\eta_\de(|x-y|)\,
\frac{x_m-y_m}{4\pi|x-y|^3}\bigg) \, dy\\
- \ep_{klm}Z'_n(x)\pd_{x_n}\int_{\pd\Om} Z(x)\cdot \nu(y)F_l(y)\, \pd_{x_j}\bigg(\eta_\de(|x-y|)\,
\frac{x_m-y_m}{4\pi|x-y|^3}\bigg) \, d\si(y)\,.
\end{multline*}
The volume integral can be dealt with just as in the case of $n=1$. Indeed,
the action of $\pd_{x_n}$ on $Z_i(x)$ is harmless, while when $\pd_{x_n}$
acts on the singular term,  
one just replaces the derivative $\pd_{x_n}$ by $-\pd_{y_n}$ and integrates
by parts. This yields another volume integral that can be related to a
principal value as above, leading to $W^{2,p}(\Om)\to L^p(\Om)$ bounds, and a boundary integral with the same
structure as above (with $DF$ playing the role of $F$), which leads to
$W^{s+1,p}(\pd\Om)\to W^{s,q}(\pd\Om)$ bounds.

The estimates for the surface integral are also as above when the
derivative $\pd_{x_n}$ acts on the field $Z(x)$. When it acts on the
singular term, however, one needs to refine the argument a little
bit. Again we start by replacing the $\pd_{x_n}$ by $\pd_{y_n}$, so we
have to control the integral
\[
\cI:=\int_{\pd\Om} Z(x)\cdot \nu(y)F_l(y)\, \pd_{y_n}\pd_{x_j}\bigg(\eta_\de(|x-y|)\,
\frac{x_m-y_m}{4\pi|x-y|^3}\bigg) \, d\si(y)\,.
\]
The point is that, as one can decompose the $n$th unit vector as
\[
e_n= T(y)+b(y)\, \nu(y)\,,
\]
where $T$ is a tangent vector and $\nu$ is the unit normal, one
can also write
\[
\pd_{y_n}=T(y)+ b(y)\,\nu(y)\cdot \nabla_y\,,
\]
where the vector field $T$ is here interpreted as a differential
operator as before. 

We can now integrate the tangent
field by parts to arrive at
\begin{multline*}
\cI=\int_{\pd\Om} [Z(x)\cdot \nu(y)]TF_l(y)\, \pd_{y_n}\pd_{x_j}\bigg(\eta_\de(|x-y|)\,
\frac{x_m-y_m}{4\pi|x-y|^3}\bigg) \, d\si(y)\\
+\int_{\pd\Om} \cO(1)\, [Z(x)\cdot \nu(y)]TF_l(y)\, \pd_{y_n}\pd_{x_j}\bigg(\eta_\de(|x-y|)\,
\frac{x_m-y_m}{4\pi|x-y|^3}\bigg) \, d\si(y) \\
+\int_{\pd\Om} [Z(x)\cdot \nu(y)]TF_l(y)\,  b(y)\,[\nu(y)\cdot \nabla_y]\pd_{x_j}\bigg(\eta_\de(|x-y|)\,
\frac{x_m-y_m}{4\pi|x-y|^3}\bigg) \, d\si(y)\,.
\end{multline*}
The first term admits bounds $W^{s+1,q}(\pd\Om)\to W^{s,q}(\pd\Om)$
just as before, the second term (which we get from tangential
derivatives that act on the unit normal and from the divergence of~$T$
as a vector field on~$\pd\Om$) is clearly bounded $W^{s,q}(\pd\Om)\to W^{s,q}(\pd\Om)$, and
we just have to control the last integral. For this we need another
cancellation, which hinges on the well-known fact that
\[
\nu(y)\cdot (x-y)
\]
is of order $|x-y|^2$ when $x,y\in \pd\Om$. This appears here because
the singular term in the last integral is
\begin{multline*}
[Z(x)\cdot \nu(y)][\nu(y)\cdot \nabla_y]\pd_{x_j}\bigg(\eta_\de(|x-y|)\,
\frac{x_m-y_m}{|x-y|^3}\bigg)=\\
 \eta_\de(|x-y|)\,[Z(x)\cdot \nu(y)]\,[\nu(y)\cdot(x-y)]
 \frac{-9|x-y|^2\de_{jm}+15(x_j-y_j)(x_m-y_m)}{|x-y|^7}\\
+\cO\Big(\frac1{|x-y|}\Big)\,.
\end{multline*}
This readily yields the estimate $W^{s+1,q}(\pd\Om)\to
W^{s,q}(\pd\Om)$. The general case is handled by repeatedly applying these ideas.
\end{proof}

\section{The extension operator $\cE_T$}
\label{S.extension}

In this section we will construct an extension operator that will
be of use in the construction of the kernel $K_\Om(x,y)$. For this,
let us denote by $\rho:\RR^3\to\RR$
the signed distance to the set~$\pd\Om$, which is smooth in the
set $\widetilde\cU:=\rho^{-1}((-\rho_0,\rho_0))$ provided that $\rho_0$ is small
enough. Notice that $\widetilde\cU$ is a tubular neighborhood of the
boundary~$\pd\Om$, which one can then identify $\widetilde\cU$ with
$\pd\Om\times(-\rho_0,\rho_0) $ via a diffeomorphism
\[
x\in \widetilde\cU\mapsto (x',\rho)\in \pd\Om\times(-\rho_0,\rho_0)\,.
\]
We will often write $\rho_x\equiv\rho(x)$. 

Taking local normal
coordinates $X\equiv (X_1,X_2)$ on~$\pd\Om$, the Euclidean metric reads
as
\begin{equation}\label{metric}
ds^2|_{\widetilde\cU}= G_\rho+d\rho^2\,,
\end{equation}
where
\[
G_\rho :=h_{ij}(X,\rho)\, dX_i\, dX_j
\]
defines a $\rho$-dependent metric on~$\pd\Om$ that coincides with the induced
surface metric on~$\pd\Om$ at $\rho=0$. Hence the
volume reads as
\[
dx= d\si_\rho(x')\, d\rho\,,
\]
where $d\si_\rho$ is a $\rho$-dependent metric on~$\pd\Om$ that can be
written in local coordinates as
\[
d\si_\rho=\sqrt{\det(h_{ij}(X,\rho))}\, dX_1\, dX_2\,.
\]
Obviously the connection with the surface measure on~$\pd\Om$ is
\[
d\si_\rho=(1+O(\rho))\, d\si\,.
\]

Consider the portion of the tubular neighborhood $\widetilde\cU$
contained in~$\Om$,
\[
\cU:=\widetilde\cU\cap\Om=\rho^{-1}((0,\rho_0))\,.
\]
Let us denote by $\om^\perp$ the $\rho$-component of a vector field~$\om$,
so that one can decompose~$\om$
in~$\cU$ as
\[
\om= \om^\parallel\,+\om^\perp\, \pd_\rho,
\]
where $\om^\parallel$ is orthogonal to $\pd_\rho$.
To put it differently,
\[
\om^\perp:= \om\cdot\nabla\rho\,,\qquad \om^\parallel:=\om-\om^\perp\, \nabla\rho\,.
\]

Consider the operator $T_0$ defined, for $x\in\pd\Om$, by the
integral
\begin{equation}\label{defT0}
T_0f(x):=\int_{\pd\Om}\frac{(x-y)\cdot \nu(x)}{4\pi|x-y|^3}\, f(y)\, d\si(y)\,.
\end{equation}
It is well known (see e.g.~\cite[Section 7.11]{Taylor}) that, under
the assumption that the boundary is smooth, $T_0$ is a
pseudodifferential operator on $\pd\Om$ of order $-1$. (More
generally, notice that for a domain with $C^2$~boundary, the kernel of
the operator is bounded by $C/|x-y|$.) In this paper we will employ operators on~$\pd\Om$ of the form
\begin{equation}\label{defT}
T f(x)=\int_{\pd\Om}K_T(x,y)\, f(y)\, d\si(y)
\end{equation}
with 
\begin{equation}\label{kernelKT}
K_T(x,y):= \frac{(x-y)\cdot \nu(x)}{4\pi|x-y|^3}+ K_T'(x,y)
\end{equation}
and $K_T'$ bounded on~$\pd\Om\times\pd\Om$. We will also assume that
the derivative of $K_T'(x,y)$ is bounded by $C/|x-y|$.

If $\chi(t)$ is a smooth cut-off function that is equal to~1 for
$t<\rho_0/2$ and which vanishes for $t>\rho_0$, one can exploit the
above identification of~$\cU$ with $\pd\Om\times(0,\rho_0)$ to
define a vector field $\cE_T\om:\Om\to\RR^3$ as
\begin{equation}\label{tom}
\cE_T\om(x):=  \bigg[\chi(\rho_x)
\int_{\pd\Om}\bigg(\frac{(x'-y)\cdot \nu(x')}{4\pi|x-y|^3}+
K_T'(x',y)\bigg)\,\om^\perp(y,\rho_x)\, d\si_{\rho_x}(y)\bigg]\, \pd_\rho\,,
\end{equation}
where $K_T'$ is the kernel defined in~\eqref{kernelKT}. Notice
that $\cE_T\om$ is supported in~$\cU$. 

The basic properties of the extension operator that we will need later
are the following:

\begin{proposition}\label{P.PDO}
Let $T$ be an operator of the form~\eqref{defT}. For any
divergence-free vector field $\om\in C^1(\BOm)$ one has
\[
T(\om\cdot\nu)=(\cE_T\om)\cdot\nu\,,
\]
where~$\cE_T\om$ is defined by~\eqref{tom}, and for each $x\in\Om$ the divergence of~$\cE_T\om$
can be written as 
\begin{align*}
\Div \cE_T\om(x)&=\int_{\pd\Om} K_{T,\mathrm{div}}(x,y)\,\om(y,\rho_x)\, d\si_{\rho_x}(y)
\end{align*}
with a kernel of the form
\begin{multline}\label{Kdiv}
K_{T,\mathrm{div}}(x,y)\,\om(y,\rho_x):=\chi(\rho_x) \bigg[\frac
{3(x'-y)\cdot\nu(x')\,
  (y-x)\cdot\nu(x')}{4\pi|x-y|^5}\,\om^\perp(y,\rho_x) \\+ \frac
{3(x'-y)\cdot\nu(x')\,
  (x-y)-|x-y|^2\nu(x')}{4\pi|x-y|^5}\cdot
\om^\parallel(y,\rho_x)\bigg]+ \tK_T(x,y)\, \om(y,\rho_x)\,.
\end{multline}
where $|\tK_T(x,y)|\leq C/|x-y|$ and is supported in
$\overline\cU\times\pd\Om$.
\end{proposition}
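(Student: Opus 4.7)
The plan is to verify the first identity by direct substitution on $\pd\Om$ and the divergence formula by differentiating the tubular expression~\eqref{tom} along $\pd_\rho$, exploiting $\Div\om=0$ in tubular coordinates, and integrating by parts on the closed surface $\pd\Om$. For the first identity, for $x\in\pd\Om$ one has $\rho_x = 0$, $x' = x$, $\chi(0) = 1$, $d\si_0 = d\si$ and $\om^\perp(y,0) = \om(y)\cdot\nu(y)$; substituting into~\eqref{tom} and comparing with~\eqref{defT}--\eqref{kernelKT} gives $\cE_T\om(x)\cdot\nu(x) = T(\om\cdot\nu)(x)$ immediately.

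For the divergence formula I would write $\cE_T\om = F(x)\,\pd_\rho$ with $F(x) := \chi(\rho_x)\int_{\pd\Om}\tK(x,y)\,\om^\perp(y,\rho_x)\,d\si_{\rho_x}(y)$ and $\tK(x,y) := (x'-y)\cdot\nu(x')/(4\pi|x-y|^3) + K_T'(x',y)$. Since the metric~\eqref{metric} is block-diagonal, $\Div(F\pd_\rho) = \pd_\rho F + F\,\pd_\rho\log\sqrt{\det h_{ij}}$, and the last term, being the product of a bounded function with $F$, goes into the remainder $\tK_T$. Differentiating $F$ along $\pd_\rho$ at fixed $x'$, using $d\si_\rho = \sqrt{\det h_{ij}(Y,\rho)}\,dY$, produces three summands: the cutoff term $\chi'(\rho_x)I(x)$, which is smooth and bounded since $\chi'$ is supported in a fixed annulus away from $\pd\Om$; the kernel-derivative term $\chi(\rho_x)\int[\nu(x')\cdot\nabla_x\tK]\om^\perp\,d\si_{\rho_x}$, coming from $\pd_\rho\tK(x,y) = \nu(x')\cdot\nabla_x\tK(x,y)$; and the density-profile term $\chi(\rho_x)\int\tK\,\pd_\rho(\sqrt{\det h_{ij}}\,\om^\perp)\,dY$.

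The crucial step is to use the tubular form of $\Div\om=0$, namely $\pd_\rho(\sqrt{\det h_{ij}}\,\om^\perp) + \pd_{Y_i}(\sqrt{\det h_{ij}}\,\om^{\parallel,i}) = 0$, to rewrite the last integral as $-\chi(\rho_x)\int\tK\,\pd_{Y_i}(\sqrt{\det h_{ij}}\,\om^{\parallel,i})\,dY$; integrating by parts on the closed surface $\pd\Om$ (no boundary term) yields $\chi(\rho_x)\int[\pd_{Y_i}\tK(x,y)]\om^{\parallel,i}\,d\si_{\rho_x}$. A direct computation of the two relevant derivatives gives
\[
\nu(x')\cdot\nabla_x\frac{(x'-y)\cdot\nu(x')}{4\pi|x-y|^3} = \frac{3(y-x)\cdot\nu(x')\,[(x'-y)\cdot\nu(x')]}{4\pi|x-y|^5},
\]
\[
\nabla_y\frac{(x'-y)\cdot\nu(x')}{4\pi|x-y|^3} = \frac{-|x-y|^2\,\nu(x')+3(x-y)\,[(x'-y)\cdot\nu(x')]}{4\pi|x-y|^5},
\]
which are precisely the coefficients of $\om^\perp$ and of $\om^\parallel$ in the claimed kernel $K_{T,\mathrm{div}}$. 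Since in the Euclidean tubular neighborhood $\nabla\rho|_{(y,\rho_x)} = \nu(y)$, the vector $\om^\parallel(y,\rho_x)$ lies in $T_y\pd\Om\subset\RR^3$, and the coordinate pairing $\pd_{Y_i}\tK\cdot\om^{\parallel,i}$ agrees with $\om^\parallel\cdot\nabla_y^{\RR^3}\tK$ up to a correction of size $\rho_x|\nabla_y\tK| = O(\rho_x/|x-y|^2) = O(1/|x-y|)$, using $\rho_x\leq |x-y|$ for $y\in\pd\Om$.

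All remaining contributions --- the $K_T'$-analogues of the leading kernels, the mean-curvature piece $F\,\pd_\rho\log\sqrt{\det h_{ij}}$, the cutoff term $\chi'(\rho_x)I$, and the $O(\rho_x)$ tangency correction --- have singularity at most $C/|x-y|$: the $K_T'$ contributions lose one power because $K_T'$ is bounded and $|\nabla K_T'|\leq C/|x-y|$ by hypothesis on $T$, while the remaining pieces are either smooth or carry the extra factor $\rho_x\leq |x-y|$. They therefore combine into a remainder $\tK_T(x,y)$ satisfying $|\tK_T(x,y)|\leq C/|x-y|$ and supported in $\overline\cU\times\pd\Om$ by the support properties of $\chi$. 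The main obstacle is the careful bookkeeping required to confirm that every subleading contribution carries exactly one extra power of $|x-y|$ relative to the two $|x-y|^{-2}$ leading kernels.
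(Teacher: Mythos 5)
Your proof follows essentially the same route as the paper: both compute $\Div(F\,\pd_\rho)$ in the tubular coordinates $(X,\rho)$, use the divergence-free condition $0=\Div\om$ written in these coordinates to trade $\pd_\rho\om^\perp$ for the tangential divergence of $\om^\parallel$, integrate by parts on the closed surface $\pd\Om$, and collect the explicit leading kernels from $\nu(x')\cdot\nabla_x$ and $\nabla_y$ of the double-layer kernel while sweeping the $\chi'$, $K_T'$, and density contributions into the $O(1/|x-y|)$ remainder $\tK_T$. You are in fact slightly more scrupulous than the paper in tracking the mean-curvature term $F\,\pd_\rho\log\sqrt{\det h_{ij}}$ and the tangential-versus-ambient derivative discrepancy, but these are bookkeeping refinements rather than a different argument.
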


\begin{proof}
Since $x\not\in\pd\Om$, one can easily compute the divergence
of~$\tom:=\cE_T\om$ as
\begin{align}
\Div {\tom}(x) &= \pd_{\rho_x}\bigg[\chi(\rho_x)
\int_{\pd\Om}\bigg(\frac{(x'-y)\cdot
  \nu(x')}{4\pi|x-y|^3}+ K_T'(x',y)\bigg)\,\om^\perp(y,\rho_x)\,
d\si_{\rho_x}(y)\bigg]\notag\\
&=\chi(\rho_x) \int_{\pd\Om}\pd_{\rho_x}\bigg(\frac{(x'-y)\cdot
  \nu(x')}{4\pi|x-y|^3}\,\om^\perp(y,\rho_x)\bigg)\,
d\si_{\rho_x}(y)+J_2\,,\label{intJ1}
\end{align}
where $J_2$ denotes a term of the form
\[
J_2=\int_{\pd\Om}\tK_T(x,y)\, \om(y,\rho_x)\, d\si(y)
\]
with a kernel $\tK_T$ as in the statement. Let us denote by $J_1$ the
integral that appears in the second line of~\eqref{intJ1}. To simplify
the expression of $J_1$, we shall use that, by~\eqref{metric}, one can write the
divergence of~$\om$ as
\[
0=\Div \om(y,\rho_x)=\pd_{\rho_x}\om^\perp(y,\rho_x)+\nabla^\parallel\cdot\om^\parallel(y,\rho_x)\,,
\]
where $\nabla^\parallel\cdot\om^\parallel(y,\rho_x)$ is the divergence of the
field $\om^\parallel$ (understood as a tangent vector field
on~$\pd\Om$) with respect to the divergence operator on~$\pd\Om$
associated with the measure $d\si_{\rho_x}$. This allows us to write
\begin{align*}
J_1&:= \int_{\pd\Om}\pd_{\rho_x}\bigg(\frac{(x'-y)\cdot
  \nu(x')}{4\pi|x-y|^3}\,\om^\perp(y,\rho_x)\bigg)\,
d\si_{\rho_x}(y)\\
&= \int_{\pd\Om}\pd_{\rho_x}\bigg(\frac{(x'-y)\cdot
  \nu(x')}{4\pi|x-y|^3}\bigg)\,\om^\perp(y,\rho_x)\,
d\si_{\rho_x}(y) \\
&\qquad \qquad \qquad \qquad\qquad+ \int_{\pd\Om}\frac{(x'-y)\cdot
  \nu(x')}{4\pi|x-y|^3}\,\pd_{\rho_x}\om^\perp(y,\rho_x)\,
d\si_{\rho_x}(y)\\
&=\int_{\pd\Om}\pd_{\rho_x}\bigg(\frac{(x'-y)\cdot
  \nu(x')}{4\pi|x-y|^3}\bigg)\,\om^\perp(y,\rho_x)\,
d\si_{\rho_x}(y) \\
&\qquad \qquad \qquad \qquad\qquad- \int_{\pd\Om}\frac{(x'-y)\cdot
  \nu(x')}{4\pi|x-y|^3}\,\nabla^\parallel\cdot\om^\parallel(y,\rho_x)\,
d\si_{\rho_x}(y)\\
&=\int_{\pd\Om}(\om^\perp(y,\rho_x)\,\pd_{\rho_x}+
\om^\parallel(y,\rho_x)\cdot\nabla)\bigg(\frac{(x'-y)\cdot
  \nu(x')}{4\pi|x-y|^3}\bigg)\,
d\si_{\rho_x}(y) \\
&= \int_{\pd\Om}\bigg[\frac
{3(x'-y)\cdot\nu(x')\,
  (y-x)\cdot\nu(x')}{4\pi|x-y|^5}\,\om^\perp(y,\rho_x) \\
&\qquad\qquad + \frac
{3(x'-y)\cdot\nu(x')\,
  (x-y)-|x-y|^2\nu(x')}{4\pi|x-y|^5}\cdot
\om^\parallel(y,\rho_x)\bigg]\,
d\si_{\rho_x}(y) \,,
\end{align*}
where we have integrated by parts the tangential divergence
$\nabla^\parallel\cdot \om^\parallel$. The proposition then follows.
\end{proof}

\section{Construction of the solution}
\label{S.construction}

Given the divergence-free field~$\om$ in $W^{k,p}(\Om)$, our objective in this section is to construct a vector field~$u$ that
satisfies the equations
\[
\curl u=\om\,,\qquad \Div u=0
\]
in~$\Om$ and is tangent to the boundary. To this end, let us start by
considering the vector field~$v$ on~$\Om$ defined as
\begin{align}
v(x)&:=v_1(x)+v_2(x)\,,\notag\\[1mm]
v_1(x)&:=\int_\Om \frac{\om(y)
  \times(x-y)}{4\pi|x-y|^3}\, dy\,,\label{defv1}\\[1mm]
v_2(x)&:= -\int_\Om \frac{\nabla \cS f(y) \times(x-y)}{4\pi|x-y|^3}\, dy\,,\label{defv2}
\end{align}
where for $x\in\Om$ we define the scalar function~$\cS f$ through the single layer potential
\[
\cS f(x):=-\int_{\pd\Om}\frac{f(y)}{4\pi|x-y|}\, d\si(y)\,,
\]
with $f$ a function on~$\pd\Om$ to be determined. 

Since $\De(\cS
f)=0$ in $\Om$ for any function~$f$, it follows from
Proposition~\ref{P.curl} that $v$~is divergence-free and
that its curl is given by
\begin{align}
\curl v(x)&= \om(x)-\nabla \cS f(x)+\nabla \int_{\pd\Om}\frac{\pd_\nu
            \cS f(y)-\om\cdot \nu(y)}{4\pi|x-y|}\, d\si(y)\notag\\
&=\om(x)+\nabla\int_{\pd\Om}\frac{f(y)+\pd_\nu
            \cS f(y)-\om\cdot \nu(y)}{4\pi|x-y|}\, d\si(y)\,.\label{curlv}
\end{align}
As a side remark, observe that since we have only defined $\cS f$ on~$\Om$, the fact that the
derivative of the extension of $\cS f$ to the whole space~$\RR^3$ is
discontinuous across~$\pd\Om$ does not play a role here. Consequently, $\pd_\nu \cS f(x)$
will necessarily stand for the interior derivative of~$\cS f$ at the boundary point~$x$ in the direction of the
outer normal, that is, 
\[
\pd_\nu \cS f(x):=\lim_{y\in \Om, \; y\to x} \nu(x)\cdot \nabla \cS f(y)\,,
\]
where for all $y\in\Om$ one has
\[
\nabla \cS f(y) =\int_{\pd\Om}f(z)\frac{y-z}{4\pi|y-z|^3}\, d\si(z)\,.
\]

Consider the operator $T_0$ defined in~\eqref{defT0}, which appears in the
analysis of the normal derivative of~$\cS f$ at the boundary through the
formula
\[
\pd_\nu \cS f = \big(T_0-\tfrac12I \big)f\,.
\]
In view of Equation~\eqref{curlv}, our goal now is to choose the function~$f$ so that
\[
f+\pd_\nu \cS f-\om\cdot\nu =0\,,
\]
or equivalently
\begin{equation}\label{eqf}
\big(\tfrac12I +T_0\big) f=\om\cdot\nu\,.
\end{equation}

Since $\frac12I+T_0$ is precisely the operator that one needs to invert
in order to solve the exterior Neumann boundary value problem for the
Laplacian, it is well known (see e.g.~\cite[Section 3.E]{Folland})
that there is a function $f$ satisfying~\eqref{eqf} if and only if
\begin{equation}\label{cond}
\int_{\pd\Om_j}\om\cdot\nu\, d\si=0\,,\qquad 1\leq j\leq m-1\,.
\end{equation}
Here $\Om_1,\dots,\Om_{m-1}$ are the bounded connected components of
$\RR^3\backslash\BOm$. Since each $\pd\Om_j$ is a connected component
of~$\pd\Om$, the hypothesis~\eqref{hypothesis} ensures
that the condition~\eqref{cond} holds, so one can find a function~$f$ on~$\pd\Om$ 
satisfying~\eqref{eqf}. Notice, moreover, that~\eqref{eqf} implies
that~$f$ can be written as
\begin{equation}\label{formulaf}
f=(2-4T)(\om\cdot\nu)\,,
\end{equation}
with $T$ an operator of the form~\eqref{defT}.

Since $T$ is a pseudodifferential operator on~$\pd\Om$ of order
$-1$, Equation~\eqref{formulaf} yields the estimate 
\[
\|f\|_{W^{k-\frac1p,p}(\pd\Om)}\leq C \|\om\cdot
\nu\|_{W^{k-\frac1p,p}(\pd\Om)}\leq C \|\om\|_{W^{k,p}(\Om)}\,,
\]
and therefore, by the properties of the single layer potential,
\[
\|\cS f\|_{W^{k+1,p}(\Om)}\leq C \|f\|_{W^{k-\frac1p,p}(\pd\Om)} \leq C \|\om\|_{W^{k,p}(\Om)}\,.
\]

With this function~$f$, by construction one then has that
\[
\curl v=\om\,,\qquad \Div v=0\,,
\]
while Proposition~\ref{P.Dw} ensures that~$v$ is bounded as
\[
\|v\|_{W^{k+1,p}(\Om)}\leq C\|\om\|_{W^{k,p}(\Om)}+ C\|\nabla\cS
f\|_{W^{k,p}(\Om)}\leq C\|\om\|_{W^{k,p}(\Om)}\,.
\]
Hence one can now find a solution to the problem~\eqref{problem} by
setting
\begin{equation}\label{defu}
u:=v-\nabla \vp\,,
\end{equation}
where $\vp$ is a solution to the Neumann boundary value problem
\[
\De\vp=0\quad\text{in }\Om\,,\qquad \pd_\nu \vp=v\cdot \nu\,.
\]
It is well-known that the solution exists and is unique up to an additive constant because
\[
\int_{\pd\Om} v\cdot \nu\, d\si=\int_\Om\Div v\, dx=0\,.
\]
Moreover, the function $\vp$ can be written as a single layer potential
\[
\vp=\cS g\,,
\]
where the function~$g$ and $v\cdot \nu$ are 
related through the operator~$T_0$ as
\[
\big(\tfrac12I -T_0\big)g=-v\cdot \nu\,.
\]
Therefore,
\begin{equation}\label{formulag}
g=-(2+4\widetilde T)(v\cdot \nu)
\end{equation}
with $\widetilde T$ a pseudodifferential operator on~$\pd\Om$ of the form~\eqref{defT}, so 
\[
\|g\|_{W^{k+1-\frac1p,p}(\pd\Om)}\leq C \|v\cdot
\nu\|_{W^{k+1-\frac1p,p}(\pd\Om)}\leq C \|v\|_{W^{k+1,p}(\Om)}\leq C \|\om\|_{W^{k,p}(\Om)}
\]
By the properties of single layer potentials it then follows that
\begin{equation*}
\|\nabla\vp\|_{_{W^{k+1,p}(\Om)}}\leq \|\cS g\|_{W^{k+1,p}(\Om)}\leq
 C\|\om\|_{W^{k,p}(\Om)}\,.
\end{equation*}
Hence in this section we have proved the following:

\begin{theorem}\label{T.1}
The field $u$ given by~\eqref{defu} solves the problem~\eqref{problem}
and satisfies the estimate~\eqref{estimate}.
\end{theorem}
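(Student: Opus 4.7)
The plan is to verify directly that the field $u = v - \nabla\varphi$ of~\eqref{defu} satisfies all three conditions of problem~\eqref{problem}, and then to chain together the estimates already derived in the discussion preceding the theorem.

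\textbf{Step 1 (curl and divergence of $v$).} First I would apply Proposition~\ref{P.curl} to $v_1$ of~\eqref{defv1} with $F = \omega$ and to $v_2$ of~\eqref{defv2} with $F = -\nabla\mathcal{S}f$. Both choices of $F$ are divergence-free on $\Omega$ (using $\Delta\mathcal{S}f = 0$ for the second), so summing yields $\Div v = 0$. Adding the two curl formulas produces~\eqref{curlv}. The compatibility condition~\eqref{cond}, which follows from the hypothesis~\eqref{hypothesis} because each $\partial\Omega_j$ is a connected component of $\partial\Omega$, guarantees the solvability of~\eqref{eqf}. The resulting $f$ forces the integrand $f + \partial_\nu \mathcal{S}f - \omega\cdot\nu$ in the boundary term of~\eqref{curlv} to vanish identically, so $\curl v = \omega$ throughout $\Omega$.

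\textbf{Step 2 (correcting the normal component).} Next I would note that the Neumann problem $\Delta\varphi = 0$, $\partial_\nu\varphi = v\cdot\nu$ is solvable because $\int_{\partial\Omega} v\cdot\nu\,d\sigma = \int_\Omega \Div v\,dx = 0$, and that writing $\varphi = \mathcal{S}g$ reduces it to inverting $\tfrac12 I - T_0$ on $v\cdot\nu$, giving~\eqref{formulag}. Then $u := v - \nabla\varphi$ is divergence-free (by harmonicity of $\varphi$), has $\curl u = \omega$ (since $\curl\nabla\varphi = 0$), and satisfies $u\cdot\nu = v\cdot\nu - \partial_\nu\varphi = 0$ on $\partial\Omega$. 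Thus all three equations in~\eqref{problem} hold.

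\textbf{Step 3 (collecting estimates).} Finally I would chain the bounds already recorded above. The pseudodifferential mapping properties of $T$ applied to~\eqref{formulaf} give
\[
\|f\|_{W^{k-\frac1p,p}(\partial\Omega)} \leq C\|\omega\cdot\nu\|_{W^{k-\frac1p,p}(\partial\Omega)} \leq C\|\omega\|_{W^{k,p}(\Omega)},
\]
and the single layer potential estimate yields $\|\mathcal{S}f\|_{W^{k+1,p}(\Omega)} \leq C\|\omega\|_{W^{k,p}(\Omega)}$. Applying Proposition~\ref{P.Dw} to $v_1$ and $v_2$ then gives $\|v\|_{W^{k+1,p}(\Omega)} \leq C\|\omega\|_{W^{k,p}(\Omega)}$. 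The analogous argument for $g$ via~\eqref{formulag} together with the layer potential estimate for $\varphi = \mathcal{S}g$ produces $\|\nabla\varphi\|_{W^{k+1,p}(\Omega)} \leq C\|\omega\|_{W^{k,p}(\Omega)}$. The triangle inequality applied to $u = v - \nabla\varphi$ completes~\eqref{estimate}.

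In short, the theorem is really a bookkeeping statement: the only genuinely nontrivial ingredients are the solvability of the two boundary integral equations for $f$ and $g$, which hinge respectively on hypothesis~\eqref{hypothesis} and on $\Div v = 0$, both of which have been established in the construction. There is no real obstacle beyond verifying that these pieces combine as claimed.
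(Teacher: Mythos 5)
Your proposal is correct and reproduces the paper's argument almost verbatim: Section~4 of the paper is itself the proof of Theorem~\ref{T.1}, and you follow the same structure of applying Proposition~\ref{P.curl} to $v_1$ and $v_2$, solving~\eqref{eqf} for $f$, correcting by $\nabla\vp=\nabla\cS g$, and chaining the pseudodifferential and layer-potential bounds with Proposition~\ref{P.Dw}. One small imprecision: $\Div v=0$ already follows from Proposition~\ref{P.curl} for an arbitrary $F$ (the kernel is identically divergence-free), so the div-free property of $\om$ and $-\nabla\cS f$ is not what gives $\Div v=0$; it is what makes the volume term $\int_\Om\Div F/(4\pi|x-y|)\,dy$ drop out of the curl formula~\eqref{curlw}.
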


\section{Existence and bounds for the integral kernel}
\label{S.kernel}

In this section we shall show that the solution~$u$ is actually obtained by
integrating the field~$\om$ against an integral kernel. Specifically,
in terms of the function~$\ell$ defined in~\eqref{defl},
we aim to prove the following:

\begin{theorem}\label{T.2}
The solution~$u$ constructed in Theorem~\ref{T.1} (see
Equation~\eqref{defu}) is of the form
\[
u(x)=\int_\Om K_\Om(x,y)\, \om(y)\, dy
\]
for some matrix-valued kernel satisfying $|K_\Om(x,y)|\leq C\ell(y)/|x-y|^2$. 
\end{theorem}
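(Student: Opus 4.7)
The plan is to decompose the solution as $u = v_1 + v_2 - \nabla\vp$ following Section~\ref{S.construction} and to show that each summand can be represented as an integral operator against $\om$ with a matrix kernel bounded by $C\ell(y)/|x-y|^2$; the desired $K_\Om(x,y)$ is then the sum of the three kernels. The field $v_1$ is, by its very definition~\eqref{defv1}, the classical Biot--Savart integral of $\om$, so its kernel satisfies the stronger bound $|K_1(x,y)|\leq C|x-y|^{-2}$ with no logarithmic factor at all. The work thus concentrates on $v_2$ and $\nabla\vp$, which involve $\om$ only indirectly.

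For $v_2$ the starting point is the pointwise identity
\[
\nabla\cS f(y)\times\frac{x-y}{|x-y|^3} \;=\; \nabla\cS f(y)\times\nabla_y\tfrac{1}{|x-y|} \;=\; \curl_y\!\big(\cS f(y)\,\nabla_y\tfrac{1}{|x-y|}\big),
\]
which holds because $\nabla\phi\times\nabla\psi=\curl(\phi\nabla\psi)$. Applying the volume-to-boundary Stokes formula $\int_\Om\curl\mathbf{A}\,dy=\int_{\pd\Om}\nu\times\mathbf{A}\,d\si$ (after deleting a small ball at $x$, whose contribution vanishes since $\nu\times\nabla_y\tfrac{1}{|x-y|}=0$ on concentric spheres) collapses $v_2$ to a pure boundary integral,
\[
v_2(x) \;=\; -\frac{1}{4\pi}\int_{\pd\Om}\cS f(y)\;\nu(y)\times\frac{x-y}{|x-y|^3}\,d\si(y).
\]
Next I would rewrite $\cS f(y)$, for $y\in\pd\Om$, as a volume integral of $\om$. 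By~\eqref{formulaf} and Proposition~\ref{P.PDO} (which asserts $T(\om\cdot\nu)=(\cE_T\om)\cdot\nu$ for divergence-free $\om$), the density decomposes as $f=F\cdot\nu|_{\pd\Om}$ with $F:=2\om-4\cE_T\om$. Plugging this into the single-layer formula for $\cS f$ and applying the divergence theorem---using $\Div F=-4\Div\cE_T\om$ together with the explicit boundary-integral formula~\eqref{Kdiv} for the latter---turns $\cS f(y)$ into an integral $\int_\Om M(y,z)\cdot\om(z)\,dz$ for an explicit kernel $M$. A Fubini swap then yields the kernel representation of $v_2$.

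The term $\nabla\vp=\nabla\cS g$ is handled analogously. By~\eqref{formulag} and Proposition~\ref{P.PDO} applied to the divergence-free field $v$, one writes $g=G\cdot\nu$ with $G:=-2v-4\cE_{\widetilde T}v$, and the divergence theorem transforms the boundary representation $\nabla\cS g(x)=\int_{\pd\Om}g(z)(x-z)/(4\pi|x-z|^3)\,d\si(z)$ into a volume integral in $\Om$. Substituting the $\om$-kernel representation of $v=v_1+v_2$ already obtained then exhibits $\nabla\vp$ as an integral against $\om$. Assembling the three contributions produces an explicit matrix kernel $K_\Om(x,y)$, manifestly smooth off the diagonal since each constituent is a composition of Newtonian and single-layer-type kernels whose only singularities occur at coinciding points.

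The main obstacle is the pointwise estimate $|K_\Om(x,y)|\leq C\ell(y)/|x-y|^2$. After collecting terms, the most singular contributions in the kernel take the schematic form
\[
\int_{\pd\Om}\frac{d\si(\zeta)}{|x-\zeta|^2\,|y-\zeta|^2},
\]
with bounded tangential factors in the numerator that do not help. For $y$ at distance $d:=\dist(y,\pd\Om)$ from the boundary, a local analysis near the projection $y'\in\pd\Om$ using the tubular parametrization from Section~\ref{S.extension} produces the logarithmic blow-up $\log(1/d)\leq C\ell(y)$ from the $|y-\zeta|^{-2}$ factor, while the geometric separation between the singularities of the two kernels yields the remaining $|x-y|^{-2}$ decay. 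The careful case analysis required---splitting $\pd\Om$ into caps around $y'$, around the projection of $x$, and the complementary region, and bookkeeping the regimes where $|x-y|$ is smaller or larger than $d$---is the technically most involved step of the proof.
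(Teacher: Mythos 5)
Your decomposition of $u=v_1+v_2-\nabla\vp$ matches the paper's, but your treatment of $v_2$ (and by analogy $\nabla\vp$) goes a genuinely different route. You collapse the \emph{outer} volume Biot--Savart integral of the gradient field $\nabla\cS f$ to a surface integral via $\nabla\cS f\times\nabla_y\tfrac1{|x-y|}=\curl_y(\cS f\,\nabla_y\tfrac1{|x-y|})$ and Stokes, and only then unfold the boundary density $\cS f|_{\pd\Om}$ back into a volume integral of~$\om$ using $f=(2\om-4\cE_T\om)\cdot\nu$ and the divergence theorem. The paper does the opposite: it keeps the $y$-integral over $\Om$, substitutes $f=(2-4T)(\om\cdot\nu)$, and converts the \emph{inner} boundary integral $\int_{\pd\Om}\ldots\om\cdot\nu(z)\,d\si(z)$ into a volume integral over~$\Om$ by the divergence theorem, arriving at a principal-value kernel $K_1(x,z)=\PV\int_\Om\ldots\,dy$ which is then estimated by rescaling and the vanishing of $\int_{\SS^2}(I-3\Theta\otimes\Theta)\,d\Theta$. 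Your route trades the PV/rescaling machinery for iterated boundary integrals.

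However, there is a gap in your estimate that I do not see you address, and it sits exactly where you wave your hands. After the Fubini swap, the leading kernel for $v_2$ has (schematically) the form $\int_{\pd\Om}\frac{(\zeta-y)\otimes[\nu(\zeta)\times(x-\zeta)]}{|\zeta-y|^3\,|x-\zeta|^3}\,d\si(\zeta)$, and you assert the ``bounded tangential factors in the numerator do not help,'' reducing it to $\int_{\pd\Om}\frac{d\si(\zeta)}{|x-\zeta|^2|y-\zeta|^2}$. But the best you get from the latter, by splitting $\pd\Om$ into $\{|x-\zeta|\gtrsim|x-y|\}$ and $\{|y-\zeta|\gtrsim|x-y|\}$, is a bound of the form $(\ell(x)+\ell(y))/|x-y|^2$; the $\ell(x)$ factor blows up as $x\to\pd\Om$ and is \emph{not} allowed by the statement (and would, for instance, spoil the $L^p\to W^{1,p}$ bound). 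The cancellation you need is precisely in the tangential structure you discard: since $\frac{x-\zeta}{|x-\zeta|^3}=\nabla_\zeta\frac1{|x-\zeta|}$ and $\curl\nabla=0$, the Stokes identity gives $\int_{\pd\Om}\nu(\zeta)\times\frac{x-\zeta}{|x-\zeta|^3}\,d\si(\zeta)=0$ for every $x\in\Om$, so the singular family $\nu(\zeta)\times\frac{x-\zeta}{|x-\zeta|^3}$ has mean zero on $\pd\Om$ and the would-be $\log(1/\dist(x,\pd\Om))$ from the $|x-\zeta|^{-2}$ singularity is killed by subtracting the value of the $\zeta\mapsto(\zeta-y)/|\zeta-y|^3$ factor at the projection of $x$. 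Without exploiting that cancellation (or some replacement for it), your ``case analysis'' step cannot close to the claimed $\ell(y)/|x-y|^2$ bound; with it, your approach should work, but the proof would need to make this explicit (and carry a parallel cancellation through the $\Div\cE_T\om$ and $\nabla\vp$ terms, where the kernel depth increases further).

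Two smaller points. First, when you apply the divergence theorem to $\int_{\pd\Om}\frac{F\cdot\nu}{|y-\zeta|}\,d\si(\zeta)$ with $y\in\pd\Om$, there is a boundary contribution from the excised half-ball at $\zeta=y$ which must be checked to vanish (it does, at order $O(\de)$, but this is not automatic). Second, $\Div F=-4\Div\cE_T\om$ is itself a boundary integral with an inverse-square kernel (Proposition~\ref{P.PDO}); its substitution adds another layer of iterated singular integral to the kernel, and the claim that the composite is still $O(\ell(y)/|x-y|^2)$ is exactly the sort of thing the paper verifies with the scaled variables and the $I_4$-type logarithmic estimate, which your sketch invokes only implicitly.
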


Without loss
of generality, we can assume that $\om\in C^1(\BOm)$. We will
construct the kernel by treating separately the three summands
appearing in the decomposition
\[
u=v_1+v_2-\nabla\vp
\]
presented in Equations~\eqref{defv1}, \eqref{defv2} and~\eqref{defu}.
Since 
\[
v_1(x)=\int_\Om K_\Om^1(x,y)\, \om(y)\, dy
\]
with
\[
K_\Om^1(x,y)\om:=\frac{\om\times (x-y)}{4\pi|x-y|^3}\,,
\]
we will only need to show that $v_2$ and $\nabla\vp$ can also be
written in a similar fashion.

\subsection*{Step 1: The first term of the kernel of $v_2$} 

Let us start with $v_2$, which by
Equations~\eqref{defv2} and~\eqref{formulaf} can be written
as the limit as $\de\to0$ of the functions
\begin{align}\label{formv2}
v_2^\de(x)&:=\frac1{8\pi^2}\int_\Om\int_{\pd\Om\backslash B_\de(y)}\frac{z-y}{|z-y|^3}\times
\frac{x-y}{|x-y|^3}\,\big[\om\cdot\nu(z)-2T(\om\cdot\nu)(z)\big]\,
  d\si(z)\, dy\\
&=: \frac{V_1(x)-2V_2(x)}{8\pi^2}\,,\notag
\end{align}
with $T$ a pseudodifferential operator on~$\pd\Om$ of order $-1$ and
of the form~\eqref{defT}.

In this subsection we will work out the details for $V_1(x)$. Integrating by parts and using the fact that the divergence of~$\om$
is zero, one readily obtains that
\begin{equation}\label{I1I2}
V_1(x):=\int_\Om\int_{\pd\Om\backslash B_\de(y)}\frac{z-y}{|z-y|^3}\times
\frac{x-y}{|x-y|^3}\,\om\cdot\nu(z)\,
d\si(z)\, dy=I_1+I_2\,,
\end{equation}
where
\begin{align*}
I_1&:=\int_\Om\int_{\Om\backslash B_\de(y)}\pd_{z_j}\bigg(\frac{z-y}{|z-y|^3}\bigg)\times
\frac{x-y}{|x-y|^3}\,\om_j(z)\, dz\, dy\\
&\phantom{:} =-\int_\Om\int_{\Om\backslash B_\de(y)}
\frac{x-y}{|x-y|^3}\times \bigg(\frac{|z-y|^2I-3\,(z-y)\otimes(z-y)}{|z-y|^5}\cdot\om(z) \bigg)\, dz\, dy\,,\\[1mm]
I_2&:=\int_\Om\int_{\Om\cap\pd B_\de(y)}\frac{z-y}{|z-y|^3}\times
\frac{x-y}{|x-y|^3}\,\om\cdot\nu(z)\, d\si(z)\, dy\,,
\end{align*}
$\nu$ denotes the outward normal of~$\pd B_\de(y)$ and
the dot in the second lines denotes the multiplication of the vector
field~$\om(z)$ by a symmetric matrix. 

Let us begin by showing that $I_2$ tends to zero as $\de\to0$. For
this, observe that for $\om\in C^1(\BOm)$ one has
\begin{align*}
\int_{\Om\cap\pd B_\de(y)}\om(z)\cdot\nu(z)\,\frac{z-y}{|z-y|^3}\,
  d\si(z)&=\om(y)\cdot \int_{\Om\cap\pd B_\de(y)}\frac{\nu(z) \otimes(
           z-y)}{|z-y|^3}\,  d\si(z)+ O(\de) \,.
\end{align*}
To analyze the remaning integral, we shall begin by noticing that, in
terms of the variable $\Theta:=\frac1\de(z-y)$, the set $\Om\cap\pd B_\de(y)$ can be
described as
\[
\Om\cap\pd B_\de(y)=\big\{ \Theta\in S_{y,\de}\big\}\,,
\]
where $S_{y,\de}$ is a subset of the unit sphere~$\SS^2\subset\RR^3$ with
\[
S_{y,\de}=\SS^2\qquad\text{if } \dist(y,\pd\Om)>\de\,.
\]
Denoting by $d\Theta$ the canonical area form on the unit sphere, it is
clear that
\[
\int_{\Om\cap\pd B_\de(y)}\frac{\nu(z) \otimes(
           z-y)}{|z-y|^3}\,  d\si(z)=\int_{S_{y,\de}}(\Theta\otimes\Theta)\, d\Theta\,.
\]
Since the norm of the latter integral is obviously bounded by a constant
that does not depend on~$y$ or $\de$ and the integral is zero  when
$S_{y,\de}=\SS^2$, it then follows that
\begin{align*}
|I_2|\leq C \int_{\Om_\de}\frac{|\om(y)|}{|x-y|^2}\,
dy+C\de\leq C\de\,,
\end{align*}
with $\Om_\de:=\{ y\in\Om: \dist(y,\pd\Om)<\de\}$, thereby proving
that the boundary term vanishes in the limit $\de\to0$:
\begin{equation}\label{I20}
\lim_{\de\to0} I_2=0\,.
\end{equation}

Our next step will be to show that 
\[
|\widetilde I_1|\leq \frac{C\, \ell(z)}{|x-z|^2}
\]
with
\[
\widetilde I_1:=\int_{\Om\backslash B_\de(z)}
\frac{x-y}{|x-y|^3}\times
\frac{|z-y|^2I-3\,(z-y)\otimes(z-y)}{|z-y|^5}\, dy\,,
\]
where the cross product of a vector field with a matrix has
the obvious meaning. Let us take some small but fixed number
$a>0$. Clearly,
\begin{equation}\label{rho}
\bigg|\int_{\Om\backslash B_a(z)}
\frac{x-y}{|x-y|^3}\times
\frac{|z-y|^2I-3\,(z-y)\otimes(z-y)}{|z-y|^5}\, dy\bigg|\leq C\,,
\end{equation}
so it suffices to study the behavior of the integral
\begin{equation}\label{I3}
I_3:= \int_{\Om\cap(B_a(z)\backslash B_\de(z))}
\frac{x-y}{|x-y|^3}\times
\frac{|z-y|^2I-3\,(z-y)\otimes(z-y)}{|z-y|^5}\, dy\,.
\end{equation}
If the distance $\rho_z$ between the point $z$ and the
boundary is greater than~$a$, setting
\begin{equation}\label{eR}
e:=\frac{x-z}{R}\quad\text{and}\quad  R:= |x-z|
\end{equation}
one has
\begin{align*}
I_3&= \int_{B_a(z)\backslash B_\de(z)}
\frac{x-y}{|x-y|^3}\times
\frac{|z-y|^2I-3\,(z-y)\otimes(z-y)}{|z-y|^5}\, dy\\
&= \frac1{R^2}\int_{B_{a/R}\backslash B_{\de/R}}
\frac{e-w}{|e-w|^3}\times
\frac{|w|^2I-3\,w\otimes w}{|w|^5}\, dw\,,
\end{align*}
where we have defined $w:=(y-z)/R$. The integral is uniformly
convergent as $R\to0$ because the integrand is bound by
$C\,|w|^{-5}$ for large~$w$. For small $R_0$ and $\de<R\, R_0$ one has the
asymptotic behavior
\begin{align*}
  I_3&=\frac1{R^2} \int_{B_{R_0}\backslash  B_{\de/R}} \big(e+          O(w)\big)\times
                                            \frac{|w|^2I-3\,w\otimes                w}{|w|^5}\, dw \\
                                          & =  e\times \frac1{R^2}\int_{B_{R_0}\backslash
                                            B_{\de/R}} 
                                            \frac{|w|^2I-3\,w\otimes
                                            w}{|w|^5}\, dw + \frac1{R^2}\int_{B_{R_0}\backslash
                                            B_{\de/R}} O(|w|^{-2})\,
                                            dw\\
&=\frac1{R^2}\int_{B_{R_0}\backslash
                                            B_{\de/R}} O(|w|^{-2})\,
                                            dw<\frac C{R^2}
\end{align*}
with a constant independent of~$\de$. Here we have used that the first
integral in the second line vanishes and the average of the matrix
$w\otimes w$ over any sphere is $-\frac13 I$. Together
with~\eqref{rho}, this shows that
\[
|I_3|\leq \frac C{|x-z|^2}
\]
whenever the distance between $z$ and $\pd\Om$ is at least~$a$.

If $\rho_z:=\dist(z,\pd\Om)<a$, a similar argument yields the same
bound but with a constant that can grow as $\ell(z)$. In this case, one
begins by straightening out the boundary, so we locally identity the boundary with a vertical plane. This amounts to saying
that there is a diffeomorphism $\Phi_z$ of the ball $B_a$ such that 
\[
\Om\cap(B_a(z)\backslash B_\de(z))=\big\{ z+ \Phi_z(w):
w\in \cB_{a,\de,\rho_z}\big\}\,,
\]
with $\cB_{a,\de,r}$ being the intersection of the annulus of outer
radius~$a$ and inner radius~$\de$ with the half-space of points whose
third coordinate is at most~$r$:
\begin{equation}\label{cB}
\cB_{a,\de,r}:=\big\{ w\in\RR^3: \de<|w|<a,\; -r<w_3\big\}
\end{equation}
Furthermore, upon choosing a suitable orientation of the axes one can take
\[
\|\Phi_z-I\|_{C^k(B_a)}< Ca
\]
and one can assume without loss of generality that
\[
\Phi_z(0)=0\,.
\]
In this case one can write
\begin{align*}
I_3&= \int_{\cB_{a,\de,\rho_z}} \frac{x-z-\Phi_z(w)}{|x-z-\Phi_z(w)|^3}\times
\frac{|\Phi_z(w)|^2I-3\,\Phi_z(w)\otimes\Phi_z(w)}{|\Phi_z(w)|^5}\,
     |\det D\Phi_z(w)|\,  dw\\
&=\int_{\cB_{a,\de,\rho_z}} \frac{x-z-w}{|x-z-w|^3}\times
\frac{|w|^2I-3\,w\otimes w}{|w|^5}\,
      dw+ \int_{\cB_{a,\de,\rho_z}} \frac{x-z-w}{|x-z-w|^3}\times
\frac{O(1)}{|w|^2}\,
      dw\\
&=:I_{31}+I_{32}\,.
\end{align*}
One can introduce the variable $q:=
w/R$ and define~$e$ and~$R$ as in Equation~\eqref{eR}. The second
integral $I_{32}$ can be readily bounded by $C/R^2$, while one can
argue as before to obtain
\begin{align*}
I_{31}&=\frac1{R^2}\int_{\cB_{a/R,\de/R,\rho_z/R}} \frac{e-q}{|e-q|^3}\times
\frac{|q|^2I-3\,q\otimes q}{|q|^5} \,
     dq %+ \frac1{R^2}\int_{\cB_{a/R,\de/R,\rho_z/R}}O(|q|^{-2})\, dq
\\
&=e\times\frac1{R^2}\int_{\cB_{a/R,\de/R,\rho_z/R}} 
\frac{|q|^2I-3\,q\otimes q}{|q|^5} \,
     dq %+ \frac1{R^2}\int_{\cB_{a/R,\de/R,\rho_z/R}}O(|q|^{-2})\, dq
\,.
\end{align*}
The point now is that
\begin{equation}\label{I4}
I_4:=\int_{\cB_{a/R,\de/R,\rho_z/R}} \frac{|q|^2I-3\,q\otimes q}{|q|^5} \,
     dq
   \end{equation}
   is bounded by $C\log(2+1/\rho_z)$, with $C$ a constant that does not depend on~$\de$, which proves
the $\de$-independent bound
\begin{equation}\label{I3bound}
|I_{31}|<\frac {C\, \ell(z)}{|x-y|^2}\,.
\end{equation}
In order to see this, let us take spherical coordinates $(r,\te,\phi)$:
\[
q=:\frac rR\,\Theta(\te,\phi)\,,\qquad \Theta(\te,\phi):=(\sin\te\, \cos\phi, \; \sin\te\, \sin\phi,\; \cos\te)
\]
and use again the shorthand notation
\[
d\Theta:=\sin\te\, d\te\, d\phi
\]
for the surface measure on the unit sphere.  With
$\te_z\in(0,\frac\pi2)$ defined by
\[
\cos\te_z=\frac{\rho_z}a\,,
\]
the integral~\eqref{I4} can be written as
\begin{multline*}
I_4= \int_0^{2\pi}\int_0^{\te_z}\int_{\de}^{\frac{\rho_z}{\cos\te}}\frac{I
-3\,\Theta\otimes\Theta}r\,  dr\, \sin\te\, d\te\,  d\phi \\+ \int_0^{2\pi}\int_{\te_z}^\pi\int_{\de}^{a}\frac{I
-3\,\Theta\otimes\Theta}r\,  dr\, \sin\te\, d\te\,  d\phi% \\
% &= \int_\de^{a}\bigg(\int_{\SS^2}(I
% -3\,\Theta\otimes\Theta)\,  d\Theta\bigg)\,\frac{dr}r+ \int_0^{\te_z}\int_a^{a/\cos\te}\int_0^{2\pi}\frac{I
% -3\,\Theta\otimes\Theta}r\,  d\phi\,  dr\, \sin\te\, d\te\\
% &=\int_0^{\te_z}\int_a^{a/\cos\te}\int_0^{2\pi}\frac{I
% -3\,\Theta\otimes\Theta}r\,  d\phi\,  dr\, \sin\te\, d\te
\end{multline*}
Since
\[
\int_{\de}^{\frac{\rho_z}{\cos\te}}\frac{dr}r=\log\frac
a\de+\log\frac{\rho_z}{a\cos\te}\,,\qquad
\int_{\de}^{a}\frac{dr}r=\log\frac a\de\,,
\]
one obtains
\begin{align*}
|I_4|&=\bigg|\log\frac a\de\int_{\SS^2}(I
-3\,\Theta\otimes\Theta)\,d\Theta + \int_0^{2\pi}\int_0^{\te_z}(I
-3\,\Theta\otimes\Theta)\, \log\frac{\rho_z}{a\cos\te} \,\sin\te\,
d\te\, d\phi\bigg|\\
& = \bigg|\int_0^{2\pi}\int_0^{\te_z}(I
-3\,\Theta\otimes\Theta)\, \log\frac{\rho_z}{a\cos\te} \,\sin\te\,
d\te\, d\phi\bigg|\\
&\leq C(1+|\log\rho_z|)\leq C\ell(z)\,,
\end{align*}
where $C$ is independent of~$\de$ and to pass to the second line we have used again that the first integral after the equality
sign vanishes. This completes the proof of~\eqref{I3bound}.

The bounds for $I_{31}$ and $I_{32}$ then imply that the kernel
defined by setting
\[
\int_\Om K_1(x,z)\,\om(x)\, dz:= \frac1{8\pi^2}\lim_{\de\to 0}
 I_1\,,
\]
which is formally given by
\[
K_1(x,z)=\frac1{8\pi^2}\PV\int_\Om
\frac{x-y}{|x-y|^3}\times \frac{|z-y|^2I-3\,(z-y)\otimes(z-y)}{|z-y|^5}\, dy\,,
\]
is well-defined kernel bounded as
\[
|K_1(x,z)|\leq
\frac{C\ell(z)}{|x-z|^2}\,.
\]
Moreover, as $I_2\to0$ as $\de\to0$ by~\eqref{I20}, it then follows
from~\eqref{I1I2} that $K_1(x,z)$ satisfies
\begin{equation}\label{K2p}
\lim_{\de\to0} V_1(x)=\int_\Om K_1(x,z)\, \om(z)\, dz\,.
\end{equation}

\subsection*{Step 2: The second integral associated with~$v_2$}

To complete our analysis of~$v_2$, it remains to consider the term
in Equation~\eqref{formv2} having the quantity $T(\om\cdot\nu)$, which
we have called $V_2(x)$. Our goal is to show that
\[
\lim_{\de\to0}V_2(x)=\int_\Om K_2(x,y)\, \om(y)\, dy
\]
for some kernel that we will bound as $|K_2(x,y)|\leq C\ell(y)/|x-y|$.

Let us begin by using Proposition~\ref{P.PDO} to write
\[
T(\om\cdot\nu)=\tom\cdot\nu\,,
\]
where
\[
\tom:=\cE_T\om
\]
is the extension of $T(\om\cdot\nu)$ associated to the operator~$T$
defined by the integral~\eqref{tom}. To analyze the term $V_2(x)$ in
Equation~\eqref{formv2}, let us
write
\begin{align*}
V_2(x)=\int_{|y-z|>\de} \frac{z-y}{|z-y|^3}\times
\frac{x-y}{|x-y|^3}\, (\tom\cdot\nu)(z)\,
d\si(z)\, dy\,,
\end{align*}
where of course the integration variables $(y,z)$ range over
$\Om\times\pd\Om$. Integrating by parts one then finds
\[
V_2=I_5+I_6-I_7\,,
\]
where
\begin{align}
I_5&:=\int_{|y-z|>\de} \frac{x-y}{|x-y|^3}\times
     \frac{|z-y|^2I-3\,(z-y)\otimes(z-y)}{|z-y|^5}\cdot \tom(z)\, dy\,
     dz\,,\notag\\
I_6&:= \int_{|y-z|>\de} \frac{z-y}{|z-y|^3}\times
\frac{x-y}{|x-y|^3}\, \Div \tom(z)\,
 dy\, dz\,,\label{I6}\\
I_7&:= \int_{|y-z|=\de} \frac{z-y}{|z-y|^3}\times
\frac{x-y}{|x-y|^3}\, (\tom\cdot\nu)(z)\,
d\si(z)\, dy\,. \notag
\end{align}

Let us discuss the structure of these integrals. Arguing as in the case of~\eqref{I20}, it is easy to show that 
\[
\lim_{\de\to0} I_7=0\,.
\]
To study $I_5$ notice that, identifying $\cU$
with $\pd\Om\times(0, \rho_0)$ via the coordinates 
\[
z\mapsto(z',\rho_z)
\]
as in Section~\ref{S.extension}, one can write 
\[
\tom(z)=\nabla\rho(z)\int_{\pd\Om} K_{T}(z,w)\, \om^\perp(w,\rho_z)\, d\si(w)
\]
with a scalar kernel of the form~\eqref{kernelKT}. It stems from this
formula that
\begin{equation}\label{intI5}
\lim_{\de\to0} I_5=\int_\Om \cK(x,\bw)\, \om(\bw)\, d\bw\,,
\end{equation}
if we define $\cK(x,\bw)$ as the limit as $\de\to0$ of
\begin{multline}
\cK_\de:=\int_\de K_{T}(z,w)\,\bigg(\frac{x-y}{|x-y|^3}\times
     \frac{|z-y|^2 \nabla\rho(z)-3\,(z-y)\cdot \nabla\rho(z) \,
       (z-y)}{|z-y|^5}\bigg)\\
\otimes \nabla\rho(\bw)\, dy\,
     d\si(z')\label{cKde}
\end{multline}
provided that the latter exists. As before, we are denoting by $\bw$
the point in~$\cU$ of coordinates $(w,\rho_z)$, and the subscript
$\de$ in the integral will henceforth mean that we integrate over
points $(y,z')$ such that
\[
|y-z|>\de\quad \text{and}\quad |w-z'|>\de\,.
\]
Of course, $(y,z')\in\Om\times\pd\Om$, but in what follows we shall
not explicitly write the domain of integration (which will be
apparent) to keep the notation simple.

To analyze the behavior of this integral, in addition to exploiting the
diffeomorphism $\cU\to \pd\Om\times(0,\rho_0)$ we will take local normal
coordinates on $\pd\Om$, thereby identifying a point~$x$ in~$\cU$
(with $x'$ is certain open subset of $\pd\Om$) with the triple
$(X,\rho_x)$, with $X\equiv (X_1,X_2)$. Since we are using normal
coordinates on~$\pd\Om$, it is standard that, given two points $x,y$
of coordinates $(X,\rho_x)$, $(Y,\rho_y)$, one has
\begin{equation}\label{dist2}
|x-y|^2=(\rho_x-\rho_y)^2+|X-Y|^2+\hot\,,
\end{equation}
where in what follows we write $\hot$ to denote higher order terms.
Hence by Equation~\eqref{kernelKT} the kernel $K_T(z,w)$ can be
written in these coordinates as
\begin{equation}\label{Kt4pi}
K_T(z,w)=\frac1{4\pi}\frac{q(\zeta)}{(\rho_z^2+|\zeta|^2)^{3/2}}+\hot\,,
\end{equation}
where $\zeta:=Z-W$ and $q(\zeta):=q_{ij}\zeta_i\zeta_j$ is a quadratic form (here we have used that
$(z'-w)\cdot \nu(z')$ vanishes to second order). Let us set $\ty:=y-z$ and write
\[
|x-y|= |x-z-\ty|=|x-\bw-\zeta-\ty|+\hot\,,
\]
where we have identified $\zeta$ with the point of coordinates
$(0,\zeta)$ and made use of~\eqref{dist2}. This permits us to write
\begin{align*}
\cK_\de=\int_\de
         \frac{q(\zeta)}{(\rho_z^2+|\zeta|^2)^{3/2}}\frac{x-\bw-\zeta-\ty}{|x-\bw-\zeta-\ty|^3}\times
         \frac{|\ty|^2e_3-3\ty \ty_3}{4\pi|\ty|^5}\otimes e_3\,
         d\ty\, d\zeta+\hot\,,
\end{align*}
where the subscript~$\de$ again means that
\[
|\ty|>\de\,,\qquad |x-\bw-\zeta-\ty|>\de\,.
\]

Let us now write
\[
x-\bw=:R\, e\,,\qquad R:=|x-\bw|
\]
and observe that, for small~$R$, one can rescale the variables as
\[
\ty':=\frac{\ty}R\,,\qquad \zeta':=\frac\zeta R\,,\qquad \rho_z':=\frac{\rho_z}R
\]
and write the above integral as
\[
\cK_\de=\frac1R\int_{\de/R}\frac{q(\zeta')}{(\rho_z'^2+|\zeta'|^2)^{3/2}}\frac{e-\zeta'-\ty'}{|e-\zeta'-\ty'|^3}\times
         \frac{|\ty'|^2e_3-3\ty' \ty_3'}{4\pi|\ty'|^5}\otimes e_3\,
         d\ty'\, d\zeta'+\hot\,,
\]
where of course the rescaled domains of integration become unbounded
in the limit~$R\to0$. 

Our goal now is to show that
\begin{equation}\label{I8}
I_8:= \int_{\de/R}\frac{q(\zeta')}{(\rho_z'^2+|\zeta'|^2)^{3/2}}\frac{e-\zeta'-\ty'}{|e-\zeta'-\ty'|^3}\times
         \frac{|\ty'|^2e_3-3\ty' \ty_3'}{|\ty'|^5}\,
         d\ty'\, d\zeta'
       \end{equation}
       is bounded as
\[
|I_8|\leq C\,\ell(z)
\]
uniformly as $\de\to0$. Observe that this will show that
\[
|\cK_\de|\leq \frac {C\,\ell(z)}R\,,
\]
which will in turn ensure the existence of the kernel $\cK(x,\bw)$ with a
bound
\begin{equation}\label{boundcK}
|\cK(x,\bw)|\leq\frac {C\,\ell(z)}{|x-\bw|}\,.
\end{equation}

To prove the bound for $I_8$ it suffices to analyze the behavior of the
integrand at the points where it can be not uniformly in $L^1\loc$
(that is, in a neighborhood of the regions $e-\zeta'-\ty'=0$ and
$\ty=0$ ). Let us start with this first case. Around $e-\zeta'-\ty'=0$ one can write
\[
e-\ty'=(B,b)\,,
\]
with $B\equiv (B_1,B_2)\in\RR^2$. It is clear from that it is enough to consider
values of $\ty$ that are close to zero, say with $|\ty|<\frac14$. It
then follows that
\begin{equation}\label{B12}
|B|^2+b^2\geq \frac12\,,
\end{equation}
so that the integral in~$\zeta'$ can be written in terms of the
variable
\[
E:=\zeta'-B
\]
as
\begin{align*}
I_9&:=\int_{\de/R}\frac{q(\zeta')}{(\rho_z'^2+|\zeta'|^2)^{3/2}}\frac{e-\zeta'-\ty'}{|e-\zeta'-\ty'|^3}\,
     d\zeta'\\
&= \int_{D_{a/R}\backslash D_{\de/R}} \frac{q(\zeta')}{(\rho_z'^2+|\zeta'|^2)^{3/2}}\frac{e-\zeta'-\ty'}{|e-\zeta'-\ty'|^3}\,
     d\zeta'+O(1)\\
&=\int_{D_{a/R}\backslash D_{\de/R}} \frac{q(E+B)}{(\rho_z'^2+|E+B|^2)^{3/2}}\frac{(E,b)}{(b^2+|E|^2)^{3/2}}\,
     dE+O(1)\,.
\end{align*}
The possible problem can arise as $b\to0$. Since in this case $B$ is
bounded away from zero by~\eqref{B12}, however, one can write $I_9$ as
\begin{align*}
I_9&=\frac{q(B)}{(\rho_z'^2+|B|^2)^{3/2}}\int_{D_{a/R}\backslash D_{\de/R}} \frac{(E,b)}{(b^2+|E|^2)^{3/2}}\,
     dE+O(1) \\
&=\frac{q(B)}{(\rho_z'^2+|B|^2)^{3/2}}\,\bigg(\int_{D_{a/R}\backslash D_{\de/R}} \frac{E \, dE}{(b^2+|E|^2)^{3/2}}\,, \; \int_{D_{a/R}\backslash D_{\de/R}} \frac{b \, dE}{(b^2+|E|^2)^{3/2}}\bigg) + O(1)\,.
\end{align*}
The first integral vanishes by parity, so one can estimate $I_9$ as
\begin{align*}
|I_9|&\leq C\int_{\RR^2}\frac{b \,
       dE}{(b^2+|E|^2)^{3/2}}+O(1)=C\int_{\RR^2}\frac{dE}{(1+|E|^2)^{3/2}}+O(1)\leq
       C\,.
\end{align*}
It remains now to consider the behavior of the integral with respect
to~$\ty$ around $\ty=0$. This can be handled exactly
as in the case of~$I_4$, which yields to a bound of the form
$C\log(2+\rho_z^{-1})$. Combining both results one immediately obtains that
\[
|I_8|\leq C\,\ell(z)\,,
\]
thereby establishing~\eqref{boundcK}.

\subsection*{Step 3: The remaining integrals}

The rest of the proof of Theorem~\ref{T.2} follows by repeatedly
applying the ideas that we have used above. Let us sketch the remaning
steps. 

To complete our treatment of~$v_2$, one has to show that, just as in~\eqref{intI5},
\[
\lim_{\de\to0}I_6=\int_\Om \cK'(x,\bw)\, \om(\bw)\, d\bw
\]
for some suitable integral kernel, where $I_6$ is given by~\eqref{I6}. In view of the formula for the
divergence of~$\tom$ given in
Proposition~\ref{P.PDO}, this kernel arises as the limit as
$\de\to0$ of the integral
\[
\cK'_\de:=\int_\de \bigg(\frac{x-y}{|x-y|^3}\times
     \frac{z-y}{|z-y|^3}\bigg) \otimes K_{T,\mathrm{div}}(z,w)\, dy\,
     d\si(z')\,,
\]
where as before the subscript~$\de$ means that one only integrates
over $(y,z')$ with 
\[
|y-z|>\de\,,\qquad |z'-w|>\de\,.
\]
Note that the kernel $K_{T,\mathrm{div}}$, which was introduced
in~\eqref{Kdiv}, diverges as the inverse square of the
distance. Setting, in local coordinates $(Z,\rho_z)$ as above, 
\[
\zeta:=Z-W\,,\qquad \ty:=y-z\,,\qquad x-\bw=:R\, e
\]
with $R:=|x-\bw|$, and rescaling the integral as before, one finds
that
\[
\cK'_\de=\frac1R I_{10}+\hot\,,
\]
where $I_{10}$ is a certain integral with respect to rescaled
variables $(\ty',\zeta')$ that is shown to be bounded by $C\,\ell(z)$ by
tediously repeating the steps taken before, with only minor
modifications. This completes the proof of the existence of a kernel
bounded as
\[
|K^2(x,z)|\leq \frac{C\, \ell(z)}{|x-z|^2}
\]
and such that
\[
\lim_{\de\to0} V_2(x)=\int_\Om K^2(x,z)\, \om(z)\, dz\,.
\]

The analysis of $\nabla\vp$ is similar. Since
\[
\nabla\vp(x)=-\int_{\pd\Om}\frac{x-y}{4\pi|x-y|^3}g(y)\, d\si(y)
\]
with~$g$ given in terms of~$v\cdot\nu$ by~\eqref{formulag}. In order to prove that one can
write
\[
\nabla\vp(x)=\int_\Om K^3(x,z)\, \om(z)\, dz
\]
with a kernel bounded as
\[
|K^3(x,z)|\leq\frac {C\,\ell(z)}{|x-z|^2}
\]
one starts off by writing 
\begin{align*}
\nabla\vp(x)&=-\int_{\pd\Om}\frac{x-y}{2\pi|x-y|^3}v\cdot\nu(y)\,
              d\si(y)-\int_{\pd\Om}\frac{x-y}{\pi|x-y|^3}\widetilde T(v\cdot\nu)(y)\,
              d\si(y)\\
&=:-\frac{J_1+2J_2}{2\pi}\,,
\end{align*}
where $\widetilde T$ is an operator of the form~\eqref{defT}. Integrating by
parts and arguing as before,
one can readily infer that
\begin{align*}
J_1&=-\lim_{\de\to0}\int_{|y-x|>\de}
     v(y)\,\frac{|x-y|^2I-3(x-y)\otimes (x-y)}{|x-y|^5}\, dy\\
&=-\frac1{8\pi^2}\lim_{\de\to0}\bigg(\int_{|y-x|>\de}
     V_1(y)\,\frac{|x-y|^2I-3(x-y)\otimes (x-y)}{|x-y|^5}\, dy \\
&\qquad\qquad\qquad \qquad\qquad+2\int_{|y-x|>\de}
     V_2(y)\,\frac{|x-y|^2I-3(x-y)\otimes (x-y)}{|x-y|^5}\, dy\bigg)\\
&=:-\frac1{8\pi^2}\lim_{\de\to0}(J_{11}+2J_{12})\,.
\end{align*}
Given the expression of $V_1$, it turns out that one can integrate by
parts to write
\begin{multline*}
\lim_{\de\to0}J_{11}=\lim_{\de\to0}\int_\de\om(z)\frac{|z-w|^2I-3(z-w)\otimes(z-w)}{|z-w|^5}\times
\frac{y-w}{|y-w|^3}\\
\cdot
\frac{|x-y|^2I-3(x-y)\otimes(x-y)}{|x-y|^5}\, dw\, dy\, dz\,.
\end{multline*}
Hence essentially the same reasoning that we used with $I_3$ allows us to show that
\[
 J_{11}=\int K^{31}(x,z)\, \om(z)\, dz
\]
with a kernel satisfying the bound
\[
|K^{31}(x,z)|\leq \frac{C\, \ell(z)}{|x-z|^2}
\]
that is given by the limit as $\de\to0$ of
the integral
\[
\lim_{\de\to0}\int_\de \frac{|z-w|^2I-3(z-w)\otimes(z-w)}{|z-w|^5}\times \frac{y-w}{|y-w|^3}\cdot
\frac{|x-y|^2I-3(x-y)\otimes(x-y)}{|x-y|^5}\, dw\, dy\,.
\]
Likewise, arguing exactly as in the analysis of $V_2$ one can show
that in the limit $\de\to0$ (which will not be written explicitly
for the ease of notation) one has
\begin{align*}
 J_{12}&= \int_\de \frac{z-w}{|z-w|^3}\times\frac{y-w}{|y-w|^3}\cdot
         \frac{|x-y|^2I-3(x-y)\otimes (x-y)}{|x-y|^5}\tom\cdot\nu(z)\,
         d\si(z)\, dw\, dy\\
&= \int_\de \tom(z)\cdot \frac{|z-w|^2I-3(z-w)\otimes(z-w)}{|z-w|^5}\times\frac{y-w}{|y-w|^3}\\
&\qquad\qquad \qquad\qquad\qquad \qquad \cdot
         \frac{|x-y|^2I-3(x-y)\otimes (x-y)}{|x-y|^5}\tom\cdot\nu(z)\,
         dz\, dw\, dy\\
&-\int_\de \frac{z-w}{|z-w|^3}\times\frac{y-w}{|y-w|^3}\cdot
         \frac{|x-y|^2I-3(x-y)\otimes (x-y)}{|x-y|^5}\Div\tom(z)\,
         dz\, dw\, dy\,.
\end{align*}
Writing $\tom$ and $\Div\tom$ in terms of $\om$ using
Lemma~\ref{P.PDO} as above one finds that
\[
\lim_{\de\to0} J_{12}=\int K^{32}(x,q)\, \om(q)\, dq\,,
\]
where the kernel $K^{32}$ is bounded as
\[
|K^{32}(x,q)|\leq \frac{C\, \ell(q)}{|x-q|}\,.
\]
The treatment of $J_2$ can be accomplished using a completely
analogous reasoning, yielding
\[
 J_2=\int K^{33}(x,z)\,\om(z)\, dz
\]
with 
\[
|K^{32}(x,z)|\leq \frac {C\, \ell(z)}{|x-z|}\,.
\]
The details, which are tedious but now straightforward, are omitted. This completes the proof
of Theorem~\ref{T.2}.

\section{Uniqueness and an application to the full div-curl system}
\label{S.uniqueness}

As a last step in the proof of Theorem~\ref{T.main}, in this easy section we will
consider the uniqueness of the solution. For completeness, we will do
so in the context of the general div-curl system
\begin{align}\label{system}
\curl v=\om\,,\qquad \Div v=f\,,\qquad v\cdot \nu=g
\end{align}
in~$\Om$. Here $\om$ is a divergence-free field satisfying~\eqref{hypothesis}
and the functions $f$ and $g$ satisfy the well known compatibility conditions
\begin{equation}\label{condfg}
\int_\Om f\, dx=\int_{\pd\Om} g\, d\si\,.
\end{equation}

\begin{proposition}\label{P.system}
The system~\eqref{system}, with $\om$, $f$ and $g$ 
as above, admits a solution~$v$, which is bounded as
\[
\|v\|_{W^{k+1,p}(\Om)}\leq C(\|\om\|_{W^{k,p}(\Om)}+ \|f\|_{W^{k,p}(\Om)}+\|g\|_{W^{k+1-\frac1p,p}(\pd\Om)})\,.
\]
Furthermore, the solution is unique modulo the addition of a harmonic
field tangent to the boundary, and the dimension of this linear space
equals the genus of~$\pd\Om$.
\end{proposition}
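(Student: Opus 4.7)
The plan is to reduce the full div-curl system to the tangent problem already solved in Theorem~\ref{T.main} by adding a gradient. Given $\om$, $f$, $g$ as in the statement, first apply Theorem~\ref{T.main} to obtain a vector field $u\in W^{k+1,p}(\Om)$ with $\curl u=\om$, $\Div u=0$, $u\cdot\nu=0$, bounded by $\|u\|_{W^{k+1,p}(\Om)}\leq C\|\om\|_{W^{k,p}(\Om)}$. Then look for a scalar potential $\psi$ solving the Neumann problem
\[
\De\psi=f\quad\text{in }\Om,\qquad \pd_\nu\psi=g\quad\text{on }\pd\Om,
\]
and set $v:=u+\nabla\psi$. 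This will automatically give $\curl v=\om$, $\Div v=f$, and $v\cdot\nu=g$.

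The Neumann problem for the Laplacian is solvable (modulo additive constants) precisely under the compatibility condition $\int_\Om f\,dx=\int_{\pd\Om}g\,d\si$, which is exactly~\eqref{condfg}. Standard elliptic regularity then yields
\[
\|\nabla\psi\|_{W^{k+1,p}(\Om)}\leq C\big(\|f\|_{W^{k,p}(\Om)}+\|g\|_{W^{k+1-\frac1p,p}(\pd\Om)}\big),
\]
and combining with the bound on~$u$ produces the desired estimate on $v=u+\nabla\psi$.

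For uniqueness, suppose $v_1,v_2$ both solve~\eqref{system}. Their difference $h:=v_1-v_2$ satisfies $\curl h=0$, $\Div h=0$ in $\Om$ and $h\cdot\nu=0$ on $\pd\Om$, which is exactly the definition of a harmonic field tangent to the boundary. Conversely, adding any such field to a solution gives another solution, so the solution is unique modulo tangent harmonic fields. The identification of the dimension of this space with the genus of $\pd\Om$ is the classical Hodge theorem for manifolds with boundary applied to tangential harmonic 1-forms (already invoked in the paragraph preceding Theorem~\ref{T.main}), so nothing new is required here.

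No step is really an obstacle: all ingredients are either supplied by Theorem~\ref{T.main} or are standard facts about the Neumann problem and Hodge theory. The only small point worth verifying carefully is the compatibility of the two parts of the construction, namely that the Neumann data $(f,g)$ satisfy the solvability condition~\eqref{condfg} independently of $u$, which is built into the hypotheses. The harmonic extension of the solution space and the genus computation are purely topological and can be quoted.
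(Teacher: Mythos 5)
Your proof is correct and follows essentially the same route as the paper: reduce to the tangent div-curl system of Theorem~\ref{T.main} by subtracting a gradient that solves the Neumann problem $\De\psi=f$, $\pd_\nu\psi=g$, with solvability guaranteed by~\eqref{condfg}, and conclude uniqueness by noting that the difference of two solutions is a tangent harmonic field, whose space has dimension equal to the genus of $\pd\Om$ by Hodge theory. Your presentation (defining $v:=u+\nabla\psi$ directly) is in fact slightly cleaner than the paper's (which writes $u:=v-\nabla\phi$), but the argument is the same.
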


\begin{proof}
Uniqueness is immediate: if $v$ and $v'$ are two solutions to the
problem, their difference $w:=v-v'$ satisfies
\begin{equation}\label{harm}
\curl w=0\,,\qquad \Div w=0\,,\qquad w\cdot \nu=0\,,
\end{equation}
so it is a harmonic field on~$\Om$ tangent to the boundary. The
dimension of the linear space of solutions to~\eqref{harm} is known to
be given by the genus of~$\pd\Om$ by Hodge theory.

To prove the existence of a solution, let $\phi$ be the only solution
to the problem
\[
\De\phi=f\,,\qquad \pd_\nu\phi=g\,,\qquad \int_\Om f\, dx=0
\]
in~$\Om$, which is granted to exist by the hypothesis~\eqref{condfg}
and satisfies
\[
\|\phi\|_{W^{k+2,p}(\Om)}\leq C(\|f\|_{W^{k,p}(\Om)}+\|g\|_{W^{k+1-\frac1p,p}(\pd\Om)})\,.
\]
The field $u:=v-\nabla\phi$ then satisfies
\[
\curl u=\om\,,\qquad \Div u=0\,,\qquad u\cdot\nu=0\,,
\]
so Theorem~\eqref{T.1} ensures that there is a solution satisfying
\[
\|u\|_{W^{k+1,p}(\Om)}\leq C\|\om\|_{W^{k,p}(\Om)}\,.
\]
The statement then follows.
\end{proof}

\section*{Acknowledgments}

The authors are supported by the ERC Starting Grants~633152 (A.E.\ and
M.A.G.F.) and~335079
(D.P.S.) and by an ICMAT-SO scholarship (M.A.G.F.) from the Spanish
Ministry of Economy. This work is supported in part by the Spanish
Ministry of Economy under the
ICMAT--Severo Ochoa grant
SEV-2015-0554.

\bibliographystyle{amsplain}

\end{document}